\def\ps@pprintTitle{%
 \let\@oddhead\@empty
 \let\@evenhead\@empty
 \def\@oddfoot{}%
 \let\@evenfoot\@oddfoot}
\newtheorem{thm}{Theorem}
\newtheorem{proposition}[thm]{Proposition}
\newdefinition{remark}{Remark}
\newproof{proof}{Proof}
\begin{document}

\begin{frontmatter}

\title{Fault Detection and Isolation of Satellite Gyroscopes Using Relative Positions in Formation Flying}

\author{Amir Shakouri\fnref{label01}}
\fntext[label01]{Research Assistant, Department of Aerospace Engineering, \href{mailto:a_shakouri@ae.sharif.edu}{a\_shakouri@ae.sharif.edu}}
\author{Nima Assadian\fnref{label03}}
\fntext[label03]{Associate Professor, Department of Aerospace Engineering, \href{mailto:assadian@sharif.edu}{assadian@sharif.edu}}
\address{Sharif University of Technology, 145888 Tehran, Iran}

\begin{abstract}

A fault detection and isolation method for satellite rate gyros is proposed based on using the satellite-to-satellite measurements such as relative position beside orbit parameters of the primary satellite. By finding a constant of motion, it is shown that the dynamic states in a relative motion are restricted in such a way that the angular velocity vector of primary satellite lies on a quadratic surface. This constant of motion is then used to detect the gyroscope faults and estimate the corresponding scale factor or bias values of the rate gyros of the primary satellite. The proposed algorithm works even in time variant fault situations as well, and does not impose any additional subsystems to formation flying satellites. Monte-Carlo simulations are used to ensure that the algorithm retains its performance in the presence of uncertainties. In presence of only measurement noise, the isolation process performs well by selecting a proper threshold. However, the isolation performance degrades as the scale factor approaches unity or bias approaches zero. Finally, the effect of orbital perturbations on isolation process is investigated by including the effect of zonal harmonics as well as drag and without loss of generality, it is shown that the perturbation effects are negligible. 

\end{abstract}

\begin{keyword}
Fault detection \sep Fault isolation \sep Satellite gyroscope \sep Formation flying

\end{keyword}

\end{frontmatter}


\section{Introduction}
\label{S:1}
 
For the sake of high reliability and safety, spacecraft should tolerate the faults of their subsystems and components. Thus, fault detection and isolation (FDI) and consequently fault recovery algorithms are a part of mission management system on-board or off-board the spacecraft. However, modern space missions require the capability of handling faults with minimum ground support \citep{Tipaldi01}. In a survey by Tafazoli \citep{Tafazoli02} 156 on-orbit failures has been identified from 1980 to 2005 of which 40\% were catastrophic. Attitude and orbit control subsystem (AOCS) caused more mission failures than any other subsystem (32\% of the whole) and gyroscopes are the reason of most AOCS failures (17\%).

FDI methods traditionally can be summarized in three major categories \citep{Ding03}; hardware redundancy based, signal processing based, and plausibility test. Hardware redundancy based FDI is the simplest and the most expensive solution. The high reliability and direct fault isolation are the most mentioned advantages of this method \citep{Pittelkau04}. Nonetheless, there are cases as BeppoSAX or ERS2 that the spacecraft lost primary as well as spare gyroscopes over a period of 5 years \citep{Tafazoli02}. The two other methods are more cost-effective than hardware redundancy. However, their main drawback is the need of high speed onboard computers that can run the fault diagnosis algorithms on-line. Nevertheless, some other objectives or constraints such as robustness, reactive detection, quick isolation, and limited onboard resources (CPU and memory) should be taken into account in the selection of the FDI strategy \citep{Tipaldi01}.

Beside gyroscopes, attitude sensors such as star trackers \citep{Williamson05}, sun sensors and earth sensors \citep{Gao06,Xiong07,Das08} or redundant gyroscopes \citep{Li09} can lead to FDI solutions. Most of these studies utilize different linear and nonlinear filtering approaches. Nonetheless, other approaches such as using conservation of angular momentum are also examined for gyroscope fault detection \citep{Markley10}.

Great advantages of formation-flying (FF) have made it suitable for many space missions of NASA, Department of Defense, ESA and other space agencies \citep{Scharf11}. Reducing the costs and increasing the flexibility of space missions are the most important advantages of using multiple satellites. FF missions can accomplish goals that are impossible or very difficult by a monolithic satellite \citep{Sabol12}; missions such as PRISMA \citep{Gill13}, TanDEM-X \citep{Krieger14}, and TerraSAR-X \citep{Pitz15}.

High-precision requirements in FF control strategies makes FDI more important in this kind of missions. FF satellites can use conventional FDI algorithms with/without utilizing their relative information. Actuator fault estimation in FF has been investigated by various methods. The concept of hierarchical architecture using a cooperative scheme is investigated in \citep{Azizi16}. A dynamic neural network-based method using relative attitudes is presented in \citep{Valdes17}. A hierarchical methodology using neural network-based scheme is investigated in \citep{Valdes18}. Actuator FDI in a network of unmanned vehicles for different architectures is presented in \citep{Meskin19}. Fault tolerant control in FF has been investigated in different approaches. Lee et al. have studied the use of GPS in estimating the relative positioning \citep{Lee20}. The use of RADAR sensor for measuring relative position, azimuth and elevation angle is investigated by Ilyas et al. \citep{Ilyas21}. Thanapalan et al. studied a redundancy based approach \citep{Thanapalan22}.

There exist many different approaches in the relative navigation (RN) of FF satellites. The RN can be done by the use of global positioning system (GPS) \citep{Montenbruck23,Tancredi24} for near earth satellites or GPS-like technologies \citep{Purcell25} for deep space missions. Satellite-to-satellite tracking (SST) methods \citep{Kim26} can be used for RN as well. SST can be attained using different kinds of measurements; range \citep{Keating27,Jian-feng28}, range rate \citep{MacArthur29}, line-of-sight vectors \citep{Patel30,Gaias31}, and combinations of them \citep{Christian32,Decoust33,Wang34}. Prior research also considers the dynamical behavior of satellites in FF including perturbations \citep{Baoyin35,Cho36}.

This paper deals with a novel FDI method that is based on relative equations of motion. In this proof-of-concept study, it is supposed that the relative position of the secondary satellite is measured in the primary satellite body frame. The first and second derivatives of relative position has been computed by a finite difference method of fourth order. A constant of motion is found which is independent of the absolute dynamical states of the secondary satellite. This constant of motion is used as the residual to be utilized for fault detection of the primary satellite gyroscopes. Moreover, some analytical formulas are found using this constant of motion for fault isolation and identification purposes.

The organization of this paper proceeds as follows. First, the constant of motion is derived that relates the rotational motion of primary satellite to its absolute translational motion and the relative dynamics. After that, a sensitivity analysis on the basic equation is presented. Next, fault determination process and the effect of thresholds on the detection of slight faults are analyzed. Next, fault isolation process and the proposed algorithm is described. Then, simulation results based on Monte-Carlo method for two dynamic scenarios and different faults are presented. Finally, the effect of perturbations on fault isolation for scale factors and biases are obtained.

\section{Constant of Motion}
\label{S:2}

Consider two satellites (primary and secondary) flying in two different trajectories around the Earth (Fig. \ref{fig:1}). The relative acceleration of the secondary satellite with respect to the primary satellite frame can be stated as \citep{Goldstein37}
\begin{equation}
\label{eq:1}
\boldsymbol a_S^P=\boldsymbol a_S^O-\boldsymbol a_P^O-\dot{\boldsymbol \omega}^{PE}\times \boldsymbol r_{SP}-2\boldsymbol \omega^{PE}\times \boldsymbol v_S^P-\boldsymbol \omega^{PE}\times(\boldsymbol \omega^{PE}\times \boldsymbol r_{SP})
\end{equation}
where $\boldsymbol a_S^O$ and $\boldsymbol a_P^O$ are the secondary and the primary satellites accelerations in an inertial coordinate systes, respectively. They can be replaced by their universal gravity formulation ($-\mu\boldsymbol r/r^3$) plus perturbation terms. The $\boldsymbol r_{SP}$ is the position vector of the secondary satellite relative to the primary and can be defined and measured in the primary body coordinate system. $\boldsymbol v_S^P$ is the time derivative of $\boldsymbol r_{SP}$ with respect to primary satellite frame. $\boldsymbol \omega^{PE}$ and $\dot{\boldsymbol \omega}^{PE}$ are the angular velocity and acceleration of the primary satellite body with respect to inertial coordinate system, respectively. By defining $\boldsymbol f(t)$, Eq. \eqref{eq:1} can be simplified as follows:
\begin{equation}
\label{eq:2}
\dot{\boldsymbol \omega}^{PE}\times \boldsymbol r_{SP}+2\boldsymbol \omega^{PE}\times \boldsymbol v_S^P+\boldsymbol \omega^{PE}\times(\boldsymbol \omega^{PE}\times \boldsymbol r_{SP})+\boldsymbol f(t)=0
\end{equation}
where
$$\boldsymbol f(t)=\boldsymbol a_S^P+\frac{\mu}{\|\boldsymbol r_{SP}+\boldsymbol r_{PO}\|^3}(\boldsymbol r_{SP}+\boldsymbol r_{PO})-\frac{\mu}{r_{PO}^3}\boldsymbol r_{PO}+\boldsymbol f_p(\boldsymbol r_{PO},\boldsymbol r_{SP})$$

\begin{figure}[H]
\centering\includegraphics[width=0.7\linewidth]{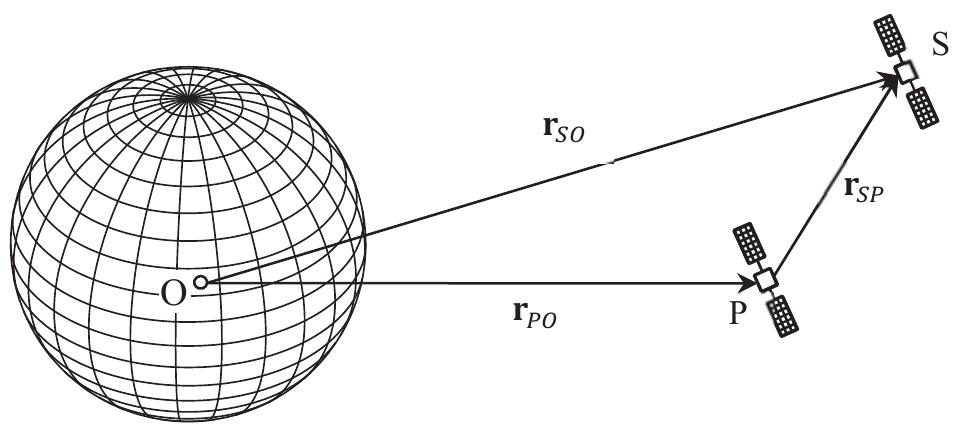}
\caption{Schematic of two orbiting satellites and their relative position.}
\label{fig:1}
\end{figure}

The perturbation term, $\boldsymbol f_p(\boldsymbol r_{PO},\boldsymbol r_{SP})$, is a function of $\boldsymbol r_{PO}$ and $\boldsymbol r_{SP}$ that includes the effect of conservative perturbation accelerations. Effect of non conservative perturbations are ignored here. Section \ref{S:7} studies the effect of any ignored terms (including conservative and non-conservative perturbations) in the function of $\boldsymbol f(t)$. The superscript of $\boldsymbol \omega^{PE}$ is removed for simplicity, i.e., $\boldsymbol \omega^{PE}\equiv\boldsymbol \omega=[\omega_x\quad\omega_y\quad\omega_z]^T$. Multiplying Eq. \eqref{eq:2} by $\boldsymbol r_{SP}^T$ and writing the equation as a function of angular acceleration elements, the following constant of motion is obtainable:
\begin{equation}
\label{eq:3}
\begin{split}
\Psi=A\omega_x^2+B\omega_y^2+C\omega_z^2+2D\omega_x\omega_y+2E\omega_y\omega_z+2F\omega_z\omega_x \\
+2G\omega_x+2H\omega_y+2J\omega_z+K
\end{split}
\end{equation}
Eq. \eqref{eq:3} is a quadratic surface in terms of $\omega_x$, $\omega_y$ and $\omega_z$. Parameters $A$ to $J$ are defined in Table \ref{table:1} and are functions of the relative position and velocity which can be measured or computed. Primary satellite absolute dynamic states are collected in $K$ parameter.

Eq. \eqref{eq:3} can be expressed in matrix form as: 
\begin{equation}
\label{eq:4}
\Psi=W^T\begin{bmatrix}
\mathcal{G} & \boldsymbol\beta \\
\boldsymbol\beta^T & K
\end{bmatrix}W=\boldsymbol \omega^T\mathcal{G}\boldsymbol \omega+2\boldsymbol \beta^T\boldsymbol \omega+K
\end{equation}
in which
$$W=\begin{bmatrix}
\boldsymbol \omega \\
1
\end{bmatrix}\quad\mathcal{G}=\begin{bmatrix}
A & D & F \\
D & B & E \\
F & E & C \\
\end{bmatrix}\quad\boldsymbol \beta=\begin{bmatrix}
G \\
H \\
J \\
\end{bmatrix}$$

\begin{table}[h]
\caption{Definitions of constant of motion parameters}
\centering
\begin{tabular}{ll}
\hline
\hline
Parameter & Definition \\
\hline
$A$ & $-({r_{SP}^2}_y+{r_{SP}^2}_z)={r_{SP}^2}_x-r_{SP}^2$ \\
$B$ & $-({r_{SP}^2}_x+{r_{SP}^2}_z)={r_{SP}^2}_y-r_{SP}^2$ \\
$C$ & $-({r_{SP}^2}_x+{r_{SP}^2}_y)={r_{SP}^2}_z-r_{SP}^2$ \\
$D$ & ${r_{SP}}_y{r_{SP}}_x$ \\
$E$ & ${r_{SP}}_z{r_{SP}}_y$ \\
$F$ & ${r_{SP}}_z{r_{SP}}_x$ \\
$G$ & ${r_{SP}}_z{v_S^P}_y-{r_{SP}}_y{v_S^P}_z$ \\
$H$ & ${r_{SP}}_x{v_S^P}_z-{r_{SP}}_z{v_S^P}_x$ \\
$J$ & ${r_{SP}}_y{v_S^P}_x-{r_{SP}}_x{v_S^P}_y$ \\
$K$ & $\boldsymbol r_{SP}^T \boldsymbol f={r_{SP}}_xf_x+{r_{SP}}_yf_y+{r_{SP}}_zf_z$ \\
\hline
\hline
\end{tabular}
\label{table:1}
\end{table}

The value of the scalar $\Psi$ should be zero. Let us introduce the measured values by adding an accent mark Tilde ($\sim$). However, if the measured values of angular velocity and relative positions are used ($\widetilde{\boldsymbol \omega}=\boldsymbol \omega+\boldsymbol \nu$), this function may have nonzero values due to the measurement noises ($\boldsymbol \nu$). Assuming negligible noises for the absolute and relative positioning, and supposing linear, zero mean, and uncorrelated noise model for the gyroscope measurements, the expected value of Eq. \eqref{eq:4} is:
\begin{equation}
\label{eq:5}
\text{E}{[\widetilde{\Psi}]}=\text{tr}\left\{
\begin{bmatrix}
\mathcal{G} & \boldsymbol\beta \\
\boldsymbol\beta^T & K
\end{bmatrix}
\Sigma_W\right\}
+\overline{W}^T
\begin{bmatrix}
\mathcal{G} & \boldsymbol\beta \\
\boldsymbol\beta^T & K
\end{bmatrix}
\overline{W}
\end{equation}
where
$$
\overline{W}\triangleq\text{E}[\widetilde{W}]=\begin{bmatrix}
\overline{\boldsymbol\omega} \\
1
\end{bmatrix}$$$$\Sigma_W\triangleq\text{E}[(\widetilde{W}-\overline{W})(\widetilde{W}-\overline{W})^T]=\begin{bmatrix}
R & 0 \\
0 & 0
\end{bmatrix}$$$$ R \triangleq\text{E}[\boldsymbol \nu^T\boldsymbol \nu]=\sigma_g^2I
$$
In the above expressions, $R$ is the covariance matrix of gyroscope measurement noise and assumed to be identical for all directions ($\sigma_g^2$). Since a linear, uncorrelated model for the noise is assumed, $\overline{W}=W$. So, Eq. \eqref{eq:5} can be reduced to
\begin{equation}
\label{eq:6}
\text{E}[\widetilde{\Psi}]=-2r_{SP}^2\sigma_g^2
\end{equation}
where $r_{SP}=\|\boldsymbol r_{SP}\|$. From Eq. \eqref{eq:6}, the mean value of constant of motion is negative, independent of orbital elements and rotational dynamics. Again, following the previous assumptions and definitions, the variance of $\widetilde{\Psi}$ can be calculated as
\begin{equation}
\label{eq:7}
\begin{split}
\text{Var}[\widetilde{\Psi}]=2\text{tr}\left\{\begin{bmatrix}
\mathcal{G} & \boldsymbol\beta \\
\boldsymbol\beta^T & K
\end{bmatrix}
\Sigma_W\begin{bmatrix}
\mathcal{G} & \boldsymbol\beta \\
\boldsymbol\beta^T & K
\end{bmatrix}
\Sigma_W\right\}+\\
4\overline{W}^T\begin{bmatrix}
\mathcal{G} & \boldsymbol\beta \\
\boldsymbol\beta^T & K
\end{bmatrix}\Sigma_W\begin{bmatrix}
\mathcal{G} & \boldsymbol\beta \\
\boldsymbol\beta^T & K
\end{bmatrix}\overline{W}
\end{split}
\end{equation}

From Eq. \eqref{eq:7}, the variance of constant of motion is independent of orbit elements but is a function of rotational dynamics. The above equations are derived for the rate gyro measurements noise only. Generally, the analytical relation of mean values and variance of $\widetilde{\Psi}$ cannot be easily derived including the relative position and velocity measurements. However, the simulations show that the value of $\widetilde{\Psi}$ still remains in a bounded region around zero as long as the measurement noises have zero mean values. This beneficial characteristic can be used for the fault detection of any relevant sensor, because any fault in the sensors deviates the function from the bound. In this paper the fault detection of rate gyros is studied.

Furthermore, the function $\Psi$ restricts the angular velocity on a quadric surface at each time instant. So, finding other restricting surfaces (as can be derived from energy or momentum analysis) can lead to an estimation of the angular velocity.

\section{Sensitivity Analysis}
\label{S:3}

Because the $\Psi$ function is used for rate gyros fault detection and isolation, it is analyzed for its sensitivity to angular velocity vector. This analysis show the behavior of $\Psi$ function with respect to deviations in angular velocities. Taking a matrix derivative from Eq. \eqref{eq:4} yields:
\begin{equation}
\label{eq:8}
\frac{1}{2}\frac{\partial\widetilde{\Psi}}{\partial\widetilde{\boldsymbol \omega}}=\mathcal{G}\widetilde{\boldsymbol \omega}+\boldsymbol \beta
\end{equation}

Parameters $A$,...,$K$ that are defined in Table \ref{table:1}, can be written in a different form as functions of direction cosines of $\boldsymbol r_{SP}$ with respect to the body coordinate axes. Applying this change, the following expression for $\mathcal{G}$ and $\boldsymbol\beta$ are obtained: 
\begin{equation}
\label{eq:9}
\mathcal{G}=r_{SP}^2\left\{\begin{bmatrix}
\cos^2\theta_x & \cos\theta_y\cos\theta_x & \cos\theta_z\cos\theta_x \\
\cos\theta_y\cos\theta_x & \cos^2\theta_y & \cos\theta_z\cos\theta_y \\
\cos\theta_z\cos\theta_x & \cos\theta_z\cos\theta_y & \cos^2\theta_z
\end{bmatrix}-I_{3\times3}\right\}
\end{equation}
\begin{equation}
\label{eq:10}
\boldsymbol \beta=\boldsymbol v_S^P\times\boldsymbol r_{SP}
\end{equation}
Eqs. \eqref{eq:9} and \eqref{eq:10} show the sensitivity is a quadratic function of $r_{SP}$.

Simulations for two FF satellites indicate how a failure in $x$-direction gyro changes the $\widetilde{\Psi}$ function for different values of $r_{SP}$. Fig. \ref{fig:2} shows a sample of 10 simulations in 120 seconds with the following details:

Primary satellite elements are defined in Table \ref{table:2} for SPHERES (Synchronized Position Hold, Engage, Reorient, Experimental Satellites). Initial angular velocity vector of primary satellite is selected to be $\boldsymbol\omega=[3\quad2.5\quad5]^T$ $\text{deg/s}$ and the initial attitude is laid on the primary satellite RSW coordinate system. The RSW coordinate is defined such that its $x$-axis is in the direction of the position vector of the satellite, the $z$-axis towards the orbital angular momentum vector, and the $y$-axis completes the right-handed coordinate system. Secondary satellite orbital elements are same as the primary satellite only the semi-major axis is different. This difference varies from $0.1\text{ km}$ to $1\text{ km}$ in steps of $0.1\text{ km}$ as indicated in Fig. \ref{fig:2}. It is supposed that a fault has occurred in $t=23\text{ s}$ that caused the $x$-gyro to measure 50\% of the real angular velocity of the $x$-axis.

\begin{table}[h]
\caption{A typical scenario for primary satellites \citep{Mohan38}.}
\centering
\begin{tabular}{lll}
\hline
\hline
Parameter & Unit & Value \\
\hline
\multicolumn{3}{c}{\textit{Orbit Elements}} \\
$a$ & km & $6783.34174$ \\
$e$ & -- & $0.0014021$ \\
$i$ & deg & $51.27632$ \\
$\omega$ & deg & $90.69731$ \\
$\Omega$ & deg & $275.17058$ \\
$\nu$ & deg & $309.67626$ \\
\multicolumn{3}{c}{\textit{Moments of Inertia}} \\
$I_{xx}$ & $\text{kg}\cdot\text{m}^2$ & $2.29\times10^{-2}$ \\
$I_{yy}$ & $\text{kg}\cdot\text{m}^2$ & $2.42\times10^{-2}$ \\
$I_{zz}$ & $\text{kg}\cdot\text{m}^2$ & $2.14\times10^{-2}$ \\
\hline
\hline
\end{tabular}
\label{table:2}
\end{table}

\begin{figure}[H]
\centering\includegraphics[width=1\linewidth]{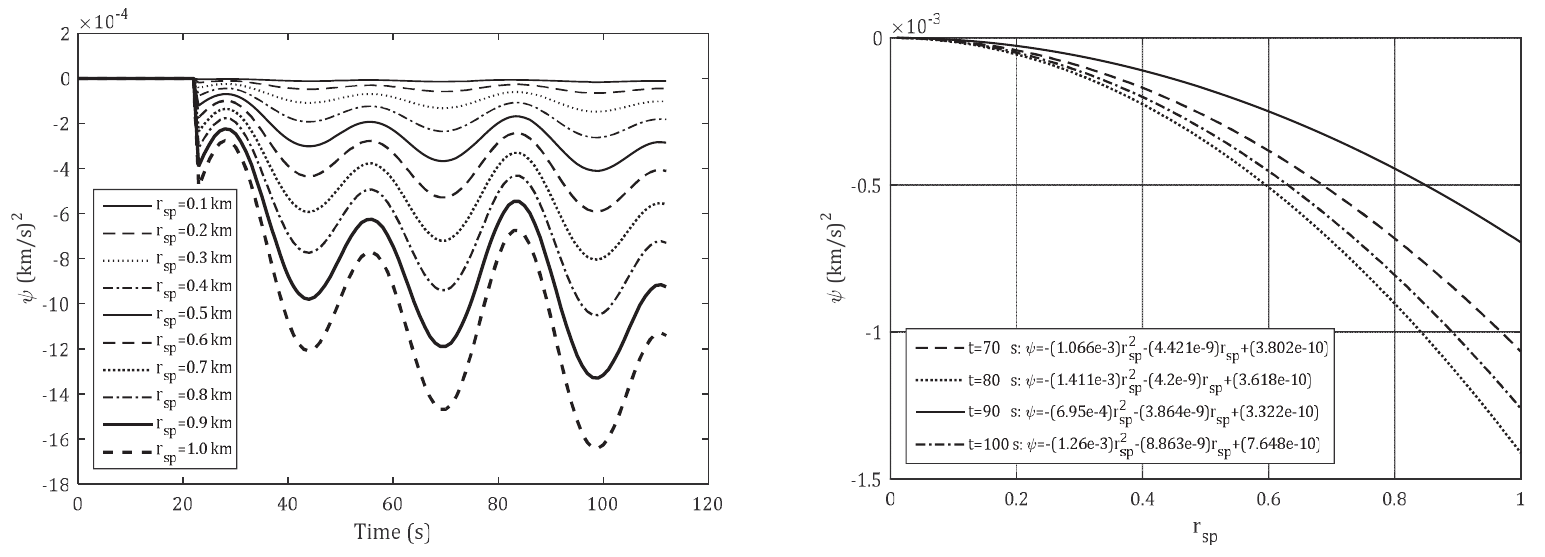}
\caption{Quadratic relation of $\widetilde{\Psi}$ as a function of $\boldsymbol r_{SP}$ in presence of fault.}
\label{fig:2}
\end{figure}

\section{Fault Detection}
\label{S:4}

A threshold should be determined for the fault detection procedure. Selecting a large threshold may prevent the detection of slight faults. Thus, the effect of the specified threshold on the detection of the slight faults should be studied. In this study, the fault in gyros is modeled through scale factor and bias. First, consider a perturbed $\widetilde{\Psi}$ function due to a fault in $x$-gyro ($\widetilde{\omega}_x=s_x\omega_x+b_x$) assuming other gyros are operating correctly ($\widetilde{\omega}_y=\omega_y$, $\widetilde{\omega}_z=\omega_z$), i.e., though all gyros may have bias in practical cases, but the bias of one faulty gyro is larger than the others over the time period of analysis. Hence, Eq. \eqref{eq:3} becomes
\begin{equation}
\label{eq:11}
\begin{split}
\widetilde{\Psi}=A\widetilde{\omega}_x^2+B\omega_y^2+C\omega_z^2+2D\widetilde{\omega}_x\omega_y+2E\omega_y\omega_z+2F\omega_z\widetilde{\omega}_x \\
+2G\widetilde{\omega}_x+2H\omega_y+2J\omega_z+K
\end{split}
\end{equation}

Since the natural variation of angular velocities does not deviate the function $\Psi$ from zero, the partial derivative of the function $\Psi$ with respect to the real values of the angular velocity elements should be zero:
\begin{equation}
\label{eq:12}
\frac{\partial\Psi}{\partial\omega_x}=0\Rightarrow A\omega_x+D\omega_y+F\omega_z+G=0
\end{equation}

Subtracting Eq. \eqref{eq:3} from Eq. \eqref{eq:11}, and using Eq. \eqref{eq:12}, the $\widetilde{\Psi}$ function can be simplified as follows:
\begin{equation}
\label{eq:13}
\widetilde{\Psi}=A(\widetilde{\omega}_x-\omega_x)^2
\end{equation}
and similarly, for two other axes:
\begin{equation}
\label{eq:14}
\widetilde{\Psi}=B(\widetilde{\omega}_y-\omega_y)^2
\end{equation}
\begin{equation}
\label{eq:15}
\widetilde{\Psi}=C(\widetilde{\omega}_z-\omega_z)^2
\end{equation}

Note that each of the above equations are applicable only if the corresponding gyro is defected (by a scale factor or bias) and the others are correct. Therefore, three possibilities exist for the fault occurrence; $x$, $y$, or $z$ direction gyro. For each case, the source of fault can be a bias or a scale factor. Thus, there are six different possibilities that should be analyzed.

It should be also noted that the value of $\widetilde{\Psi}$ in Eqs. \eqref{eq:13} to \eqref{eq:15} can only be non-positive due to any kind of faults in gyros. This is because the coefficients $A$, $B$, and $C$ are nonpositive quantities (Table \ref{table:1}). Moreover, if the secondary satellite lies in the $x$, $y$, or $z$-axis of the primary satellite body coordinate system, the values of $A$, $B$, or $C$ would be zero respectively, and the corresponding gyro fault does not influence the $\widetilde{\Psi}$ function.

\subsection{Scale Factor Analysis}

First, assume that the $x$-gyro has a scale factor error, $s_x$. Therefore, $\widetilde{\omega}_x=s_x\omega_x$ and then:
\begin{equation}
\label{eq:16}
\widetilde{\Psi}=A\omega_x^2(s_x-1)^2
\end{equation}
similarly, for other axes: 
\begin{equation}
\label{eq:17}
\widetilde{\Psi}=B\omega_y^2(s_y-1)^2
\end{equation}
\begin{equation}
\label{eq:18}
\widetilde{\Psi}=C\omega_z^2(s_z-1)^2
\end{equation}

Eqs. \eqref{eq:16} to \eqref{eq:18} indicate that $\widetilde{\Psi}$ has a quadratic behavior with respect to $s_{x,y, \text{ or } z}$, that its maximum value is for $s_{x,y, \text{ or } z}=1$. Fig. \ref{fig:3}-a shows this relation for the simulations with the same details of previous section sample mission.
 
Similar to the previous example, the primary satellite elements are defined in Table \ref{table:2} and initial angular velocity vector of primary satellite is $\boldsymbol\omega=[3\quad2.5\quad5]^T\text{ deg/s}$ and the initial attitude is laid on RSW coordinate system. All secondary satellite orbital elements are the same as the primary satellite except the semi-major axis which is $1\text{ km}$ higher. Fig. \ref{fig:3}-a is plotted for a scale factor fault in $x$-gyro taking place at $t=23\text{ s}$ for $s_x$ from $0$ to $2$.

\subsection{Bias Analysis}

By supposing a bias in $x$-gyro, $b_x$, is the source of the fault. So, $\widetilde{\omega}_x=\omega_x+b_x$, and substituting $\widetilde{\omega}_x$ in Eq. \eqref{eq:13} yields
\begin{equation}
\label{eq:19}
\widetilde{\Psi}=Ab_x^2
\end{equation}
and similarly for other axes: 
\begin{equation}
\label{eq:20}
\widetilde{\Psi}=Bb_y^2
\end{equation}
\begin{equation}
\label{eq:21}
\widetilde{\Psi}=Cb_z^2
\end{equation}

Eqs. \eqref{eq:19} to \eqref{eq:21} show the quadratic relation of the $\widetilde{\Psi}$ as a function of $s_{x,y, \text{ or } z}$ with a maximum value in $b_{x,y, \text{ or } z}=0$. Fig. \ref{fig:3}-b shows this relation with the following details:

Primary and secondary satellite dynamics are as defined in the previous scenario. A failure in $x$-gyro occurs at $t=23\text{ s}$ by a bias that varies from $-1$ to $1\text{ deg/s}$.

The six Eqs. \eqref{eq:16} to \eqref{eq:21} can help to know how a fault can change $\widetilde{\Psi}$ and lead it to cross the specified threshold. Fig. \ref{fig:3}-a and \ref{fig:3}-b show the typical behavior of $\widetilde{\Psi}$ as a function of scale factor and bias of gyro measurements. It can be easily observed from these graphs that any selected threshold results in neglecting slight faults.

\begin{figure}[H]
\centering\includegraphics[width=1\linewidth]{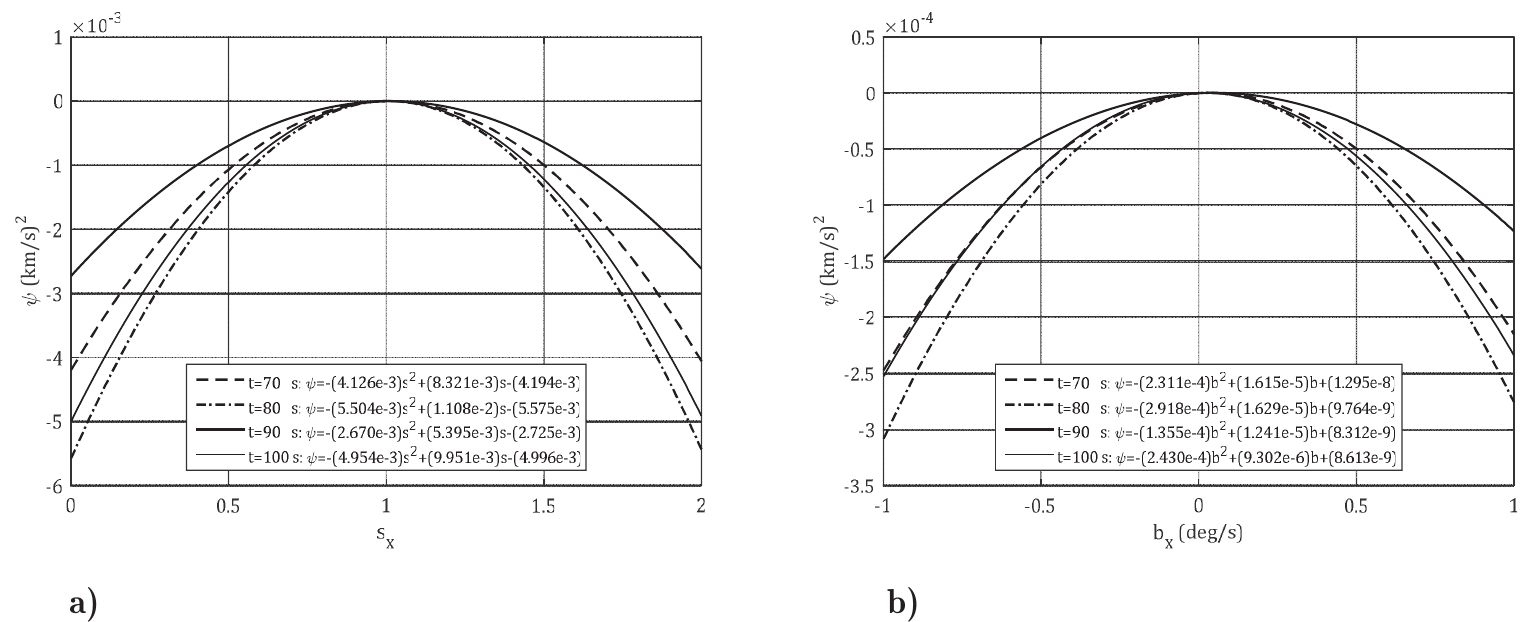}
\caption{Quadratic relation of $\widetilde{\Psi}$ as a function of the a) scale factor $s_x$, and b) bias $b_x$.}
\label{fig:3}
\end{figure}

\section{Fault Isolation}
\label{S:5}

So far, there are six useful equations in hand that shows how a fault (scale factor or bias) can perturb the $\widetilde{\Psi}$ function. Suppose the only faulty sensor is the $x$-gyro and it is defective by a scale factor. Considering Eq. \eqref{eq:13} and substituting $\omega_x=\widetilde{\omega}_x/s_x$, a quadratic equation for $s_x$ is obtained as a function of measured angular velocity $\widetilde{\omega}_x$:
\begin{equation}
\label{eq:22}
(\widetilde{\Psi}-A\widetilde{\omega}_x^2)s_x^2+2A\widetilde{\omega}_x^2s_x-A\widetilde{\omega}_x^2=0
\end{equation}
similarly,
\begin{equation}
\label{eq:23}
(\widetilde{\Psi}-B\widetilde{\omega}_y^2)s_y^2+2B\widetilde{\omega}_y^2s_y-B\widetilde{\omega}_y^2=0
\end{equation}
\begin{equation}
\label{eq:24}
(\widetilde{\Psi}-C\widetilde{\omega}_z^2)s_z^2+2C\widetilde{\omega}_z^2s_z-C\widetilde{\omega}_z^2=0
\end{equation}

The equation of $\widetilde{\Psi}$ for biased gyros are only a quadratic function of bias and it is not a function of angular velocity (Eqs. \eqref{eq:19}-\eqref{eq:21}). In each of these six equations \eqref{eq:19}-\eqref{eq:21}, only one unknown exists which is a scale factor or a bias in a certain direction. Solving Eqs. \eqref{eq:19} to \eqref{eq:24} leads to final equations that can be used for the fault isolation purpose:
\begin{equation}
\label{eq:25}
s_x=\frac{1}{(1\pm\widetilde{\Psi}_x)}, \quad s_y=\frac{1}{(1\pm\widetilde{\Psi}_y)}, \quad s_z=\frac{1}{(1\pm\widetilde{\Psi}_z)}
\end{equation}
\begin{equation}
\label{eq:26}
b_x=\pm\sqrt{\frac{\widetilde{\Psi}}{A}}, \quad b_y=\pm\sqrt{\frac{\widetilde{\Psi}}{B}}, \quad b_z=\pm\sqrt{\frac{\widetilde{\Psi}}{C}}
\end{equation}
where
$$\widetilde{\Psi}_x=\sqrt{\frac{\widetilde{\Psi}}{A\widetilde{\omega}_x^2}}, \quad \widetilde{\Psi}_y=\sqrt{\frac{\widetilde{\Psi}}{B\widetilde{\omega}_y^2}}, \quad \widetilde{\Psi}_z=\sqrt{\frac{\widetilde{\Psi}}{C\widetilde{\omega}_z^2}}$$

Eqs. \eqref{eq:25} and \eqref{eq:26} are used for estimating the values of scale factors or biases which are the sources of faults in this study. It should be noted that these equations work if and only if one source of fault is active (solely scale factor or bias of one gyro). The schematic flowchart of the proposed algorithm is shown in Fig. \ref{fig:4}. This diagram shows the process of fault detection and isolation based on above equations.

The ``decision making'' process in Fig. \ref{fig:4} is for deciding which of six possibilities has been occurred. The six estimated values based on Eqs. \eqref{eq:25} and \eqref{eq:26} are utilized to calculate the six recovered angular velocities, $\widehat{\omega}$, and consequently six recovered functions of $\Psi$ denoted by $\widehat{\Psi}$. The recovered $\widehat{\Psi}$ function of the actual possibility remains in a threshold near zero unlike the others. This is based on the following proposition which is the fundamental of the proposed fault isolation procedure.

\begin{figure}[H]
\centering\includegraphics[width=1\linewidth]{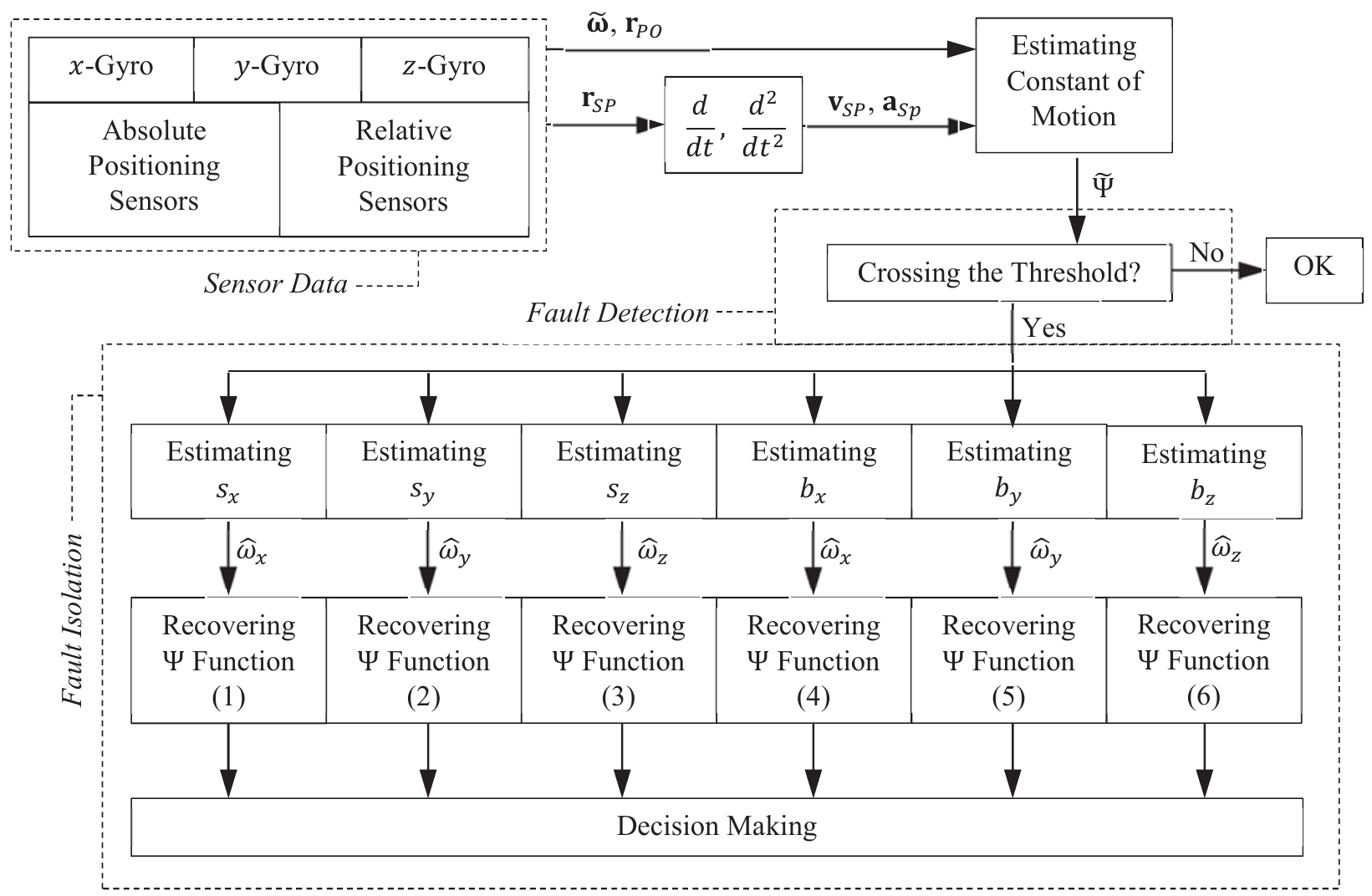}
\caption{The proposed algorithm for the fault detection and isolation.}
\label{fig:4}
\end{figure}

\begin{proposition}
\label{prop:1}
Let $\widetilde{\omega}_x\neq\omega_x$, $\widetilde{\omega}_y=\omega_y$, and $\widetilde{\omega}_z=\omega_z$. If $\widehat{\Psi}=0$, then $\widehat{\omega}_x=\omega_x$ and this is true for the rest of directions.
\end{proposition}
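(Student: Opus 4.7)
The strategy is to exploit the fact that, with $\omega_y$ and $\omega_z$ held fixed, the function $\Psi$ in Eq.~\eqref{eq:3} is a univariate quadratic in $\omega_x$ whose leading coefficient is $A$. The plan is to show that the true value $\omega_x$ is a double root of this quadratic, so the only value at which it can vanish is itself, forcing $\widehat{\omega}_x=\omega_x$.

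First I would observe that, under the hypothesis that only the $x$-gyro is faulty, the isolation procedure of Eqs.~\eqref{eq:25}--\eqref{eq:26} touches only the $x$-component, so the recovered angular velocity inherits $\widehat{\omega}_y=\widetilde{\omega}_y=\omega_y$ and $\widehat{\omega}_z=\widetilde{\omega}_z=\omega_z$. Consequently $\widehat{\Psi}$ is obtained by evaluating the one-variable polynomial $p(\xi)\equiv\Psi(\xi,\omega_y,\omega_z)$ at $\xi=\widehat{\omega}_x$, and this $p$ is visibly a quadratic in $\xi$ with leading coefficient $A$.

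Next I would invoke two identities already established for the true angular velocity. The constant-of-motion property $\Psi=0$ gives $p(\omega_x)=0$, and Eq.~\eqref{eq:12} gives $\tfrac12\,p'(\omega_x)=A\omega_x+D\omega_y+F\omega_z+G=0$. Together these say that $\omega_x$ is a double root of $p$, so $p(\xi)=A(\xi-\omega_x)^2$. The hypothesis $\widehat{\Psi}=p(\widehat{\omega}_x)=0$ then collapses to $A(\widehat{\omega}_x-\omega_x)^2=0$, yielding $\widehat{\omega}_x=\omega_x$. The statement for the $y$- and $z$-directions follows from exactly the same calculation with $B$ or $C$ playing the role of the leading coefficient and the analogous partial-derivative identity replacing Eq.~\eqref{eq:12}.

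The main subtlety I would flag, rather than a serious obstacle, is the degenerate geometry $A=0$, which happens precisely when the secondary satellite lies on the primary body $x$-axis, as already remarked after Eqs.~\eqref{eq:13}--\eqref{eq:15}; there the quadratic $p$ degenerates to a linear or constant polynomial in $\xi$ and recovery from $\widehat{\Psi}$ alone is impossible. Away from such instantaneous singular configurations, the argument is a direct polynomial identification and requires no further computation.
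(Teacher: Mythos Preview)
Your proposal is correct and follows essentially the same route as the paper: both fix $\omega_y,\omega_z$ at their true values, view $\widehat{\Psi}$ as a quadratic in the single variable $\widehat{\omega}_x$, and conclude that $\widehat{\Psi}=0$ forces $\widehat{\omega}_x=\omega_x$ under the nondegeneracy $A\neq 0$. The paper subtracts Eq.~\eqref{eq:3} and factors out $(\widehat{\omega}_x-\omega_x)$, then appeals informally to ``nonzero coefficients''; your version is slightly cleaner in that you explicitly invoke Eq.~\eqref{eq:12} to recognize $\omega_x$ as a \emph{double} root, which pins down the full factorization $p(\xi)=A(\xi-\omega_x)^2$ and makes the nondegeneracy condition $A\neq 0$ transparent rather than implicit.
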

\begin{proof}
If $\widetilde{\omega}_x\neq\omega_x$, $\widetilde{\omega}_y=\omega_y$, and $\widetilde{\omega}_z=\omega_z$, 
$$
\widehat{\Psi}=A\widehat{\omega}_x^2+B\omega_y^2+C\omega_z^2+2D\widehat{\omega}_x\omega_y+2E\omega_y\omega_z+2F\omega_z\widehat{\omega}_x+2G\widehat{\omega}_x+2H\omega_y+2J\omega_z+K
$$
Subtracting Eq. \eqref{eq:3} from above equation yields
$$
\widehat{\Psi}=(\widehat{\omega}_x-\omega_x)[A(\widehat{\omega}_x+\omega_x)+2D\omega_y+2F\omega_z+2G]
$$
For satisfying $\widehat{\Psi}=0$ in a motion with nonzero coefficients (which is generally the case), the above equations leads to $\widehat{\omega}_x-\omega_x=0$. Therefore, if $\widehat{\Psi}=0\Rightarrow\widehat{\omega}=\omega_x$.
\qed
\end{proof}

\begin{remark}
\label{rem:1}
In the case that only one angular velocity deviates from its true value, if the recovered $\widehat{\Psi}$ function approaches zero, the recovered angular velocity approaches the real angular velocity in each direction. It should be noted that Proposition \ref{prop:1} is valid as long as the $\Psi$ function has at least one nonzero coefficient (Table \ref{table:1}), which means that it has nonzero angular velocity and nonzero relative distances and velocities.
\end{remark}

\section{Simulation Results}
\label{S:6}

Two scenarios are defined as fine and coarse cases (scenario (I) and (II), respectively):

\textbf{Scenario (I):} Primary satellite elements are defined in Table \ref{table:1}. Initial angular velocity vector of primary satellite is selected to be $\boldsymbol\omega_0= [3\quad2.5\quad5]^T$ $\text{deg/s}$ and the initial body coordinate system is supposed to be on RSW coordinate system. Secondary satellite orbital elements are same as the primary satellite except of semi-major axis which is $1\text{ km}$ higher. The results of this scenario are promising, that is why it is called the fine scenario.

\textbf{Scenario (II):} All parameters are same as defined in scenario (I) except the initial angular velocity of the primary satellite which is $\boldsymbol\omega_0= [3\quad-2.5\quad15]^T \text{ deg/s}$. As it is shown later, the near zero of $B$ parameter during this scenario results in course estimation of faults. 

The $\Psi$ function in a faultless situation for both scenarios are analyzed. The system is analyzed using a standard Monte-Carlo simulation considering the initial value uncertainties \citep{Asmussen39}. Fig. \ref{fig:5}-a and b indicates the mean value and the standard deviation of $\Psi$ in $1000$ runs with initial uncertainties as given in Table \ref{table:3}. A laser ranging system similar to \citep{Psiaki40} is adopted for the purpose of this study. Psiaki used a ranging accuracy of $0.1\text{ m}$ for $100\text{ km}$ distance, so in this study an accuracy of $0.001\text{ m}$ for $1\text{ km}$ relative distance is selected. Moreover, the angular accuracy of the relative distance ranging is selected to be $0.2\text{ arcsec}$, exactly similar to \citep{Psiaki40}, which results in relative position error of $0.001\text{ m}$ in each axis. The accuracy of angular velocity measurements using a fiber optic gyro (FOG) is selected to be $3.6\text{ deg/hr}$, which is above the standard applicable FOGs \citep{Armenise41}. The relative velocity and acceleration are calculated using a fourth order numerical differentiation method \citep{Gerald42}. While the equations are suitable to take into account the effect of conservative perturbations, the following simulations are in a two-body gravity field. The effect of neglected perturbations are analyzed in the Section \ref{S:7}.

Fig. \ref{fig:5} indicates that the constant of motion remains near zero in the presence of uncertainties and sensor noises. The boundary of the $\Psi$ function is plotted in this figure by $\pm1\sigma$ (one standard deviation away) of the mean value.

\begin{table}[h]
\caption{Standard deviation of the initial parameters for scenario (I) and (II).}
\centering
\begin{tabular}{lll}
\hline
\hline
Parameter & $\sigma$ & Unit \\
\hline
\multicolumn{3}{c}{\textit{Orbit Elements}} \\
$a$ & km & $1.5\times10^1$ \\
$e$ & -- & $1.0\times10^{-5}$ \\
$i$ & deg & $1.0\times10^{-3}$ \\
$\omega$ & deg & $1.0\times10^{-3}$ \\
$\Omega$ & deg & $1.0\times10^{-3}$ \\
$\nu$ & deg & $1.0\times10^{-3}$ \\
\multicolumn{3}{c}{\textit{Initial Angular Velocities}} \\
$\omega_x$,$\omega_y$,$\omega_z$ & $\text{deg/h}$ & $360$ \\
\hline
\hline
\end{tabular}
\label{table:3}
\end{table}

\begin{figure}[H]
\centering\includegraphics[width=1\linewidth]{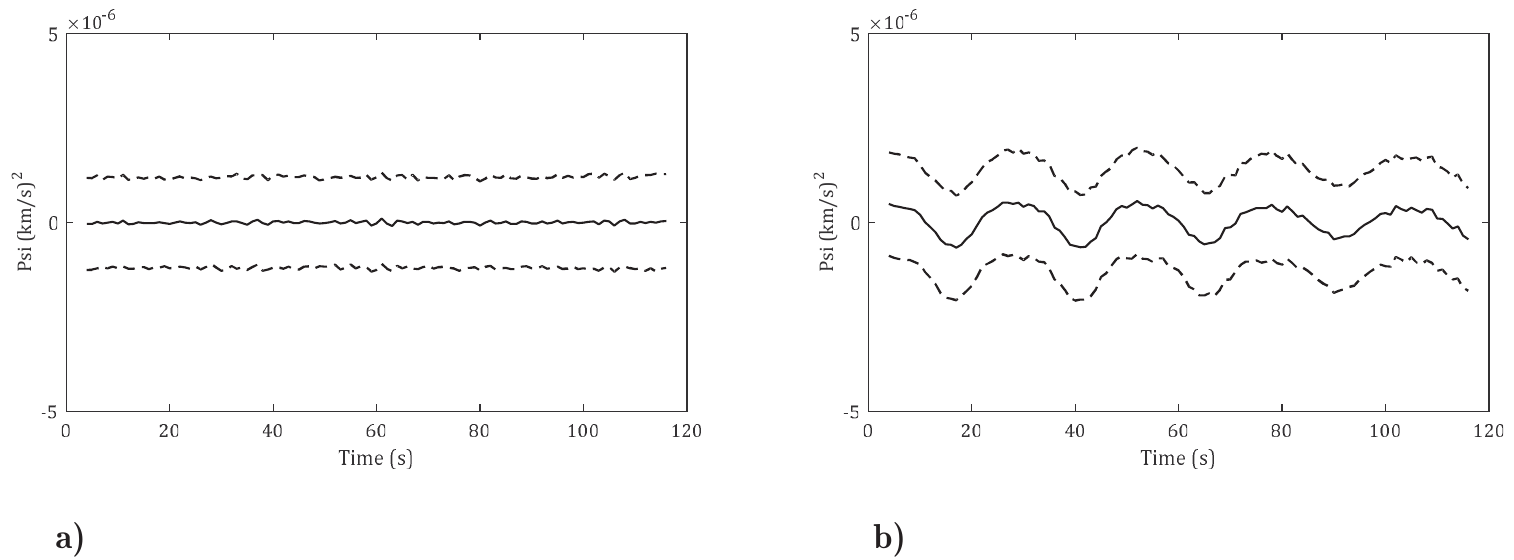}
\caption{The $\Psi$ function mean value (solid line) with $\pm\sigma$ (dashed line) in a faultless noisy condition. a) scenario (I); b) scenario (II).}
\label{fig:5}
\end{figure}

Fig. \ref{fig:6}-a and b show the scatter of $10^3$ runs altogether for both scenarios. This graph can be used for selecting a thresholds above which the $\Psi$ is deviated from zero as a result of other factors than noise. In this way, for scenario (I) and (II) thresholds of $\pm6\times10^{-6}$ and $\pm6.5\times10^{-6}$ has been selected, respectively.

\begin{figure}[H]
\centering\includegraphics[width=1\linewidth]{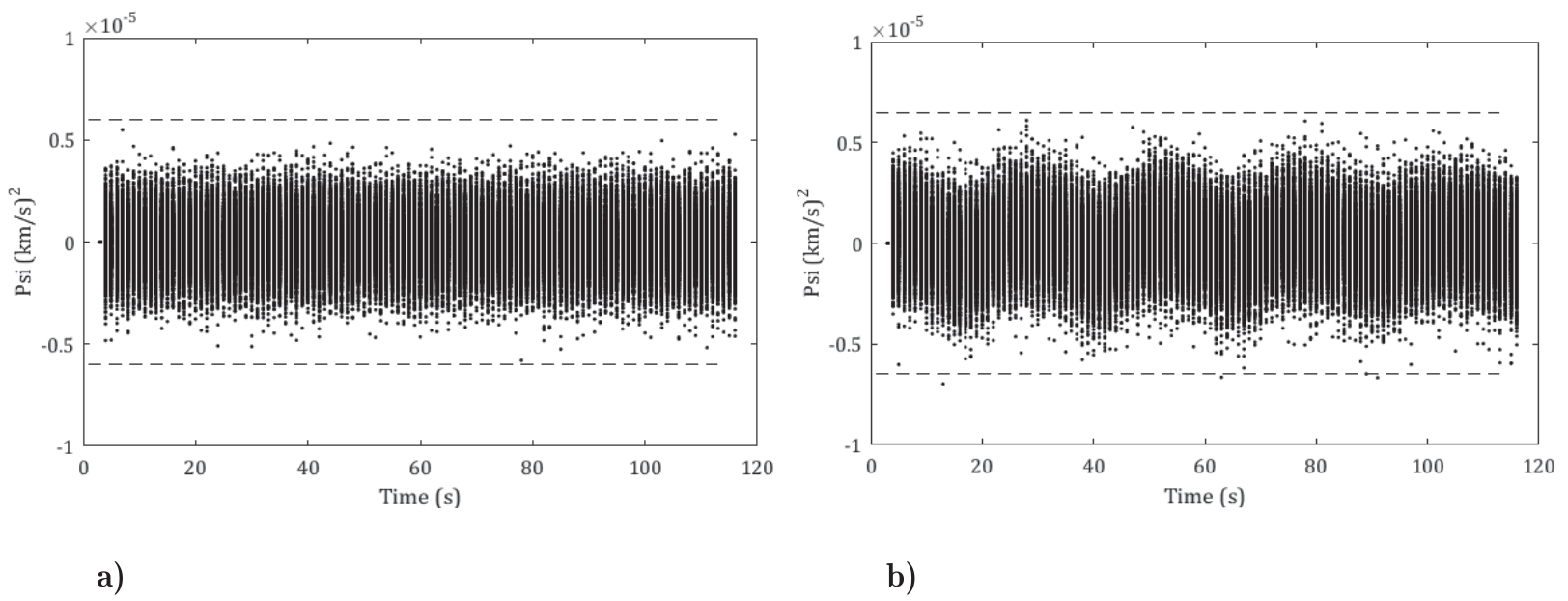}
\caption{The $\Psi$ function in $1000$ faultless noisy conditions and the selected threshold (dashed line) a) scenario (I); b) scenario (II).}
\label{fig:6}
\end{figure}

\subsection{Scale Factor Faults}

Suppose a scale factor of $0.5$ on $x$-gyroscope of scenario (I) and $y$-gyroscope of scenario (II) both are activated in $t=56\text{ s}$. The mean value for the time that fault has been detected for scenario (I) is $t_{FD1}=58.02\text{ s}$ and its standard deviation is $\sigma_{FD1}=0.18\text{ s}$. It means that the fault is detected in around $2$ seconds after occurrence. These values are calculated after $1000$ simulations in random initial condition and in presence of noise. For scenario (II), the mean fault detection time is $t_{FD1}=57.96\text{ s}$ with a standard deviation of $\sigma_{FD1}=1.30\text{ s}$. Thus, in this scenario the faults are generally detected in less than $4$ seconds.

Assuming $s<1$, six possibilities for each scenario exist after fault detection. The estimation corresponding to each possibility are plotted in Figs. \ref{fig:7} and \ref{fig:8} for scenarios I and II, respectively.

\begin{figure}[H]
\centering\includegraphics[width=1\linewidth]{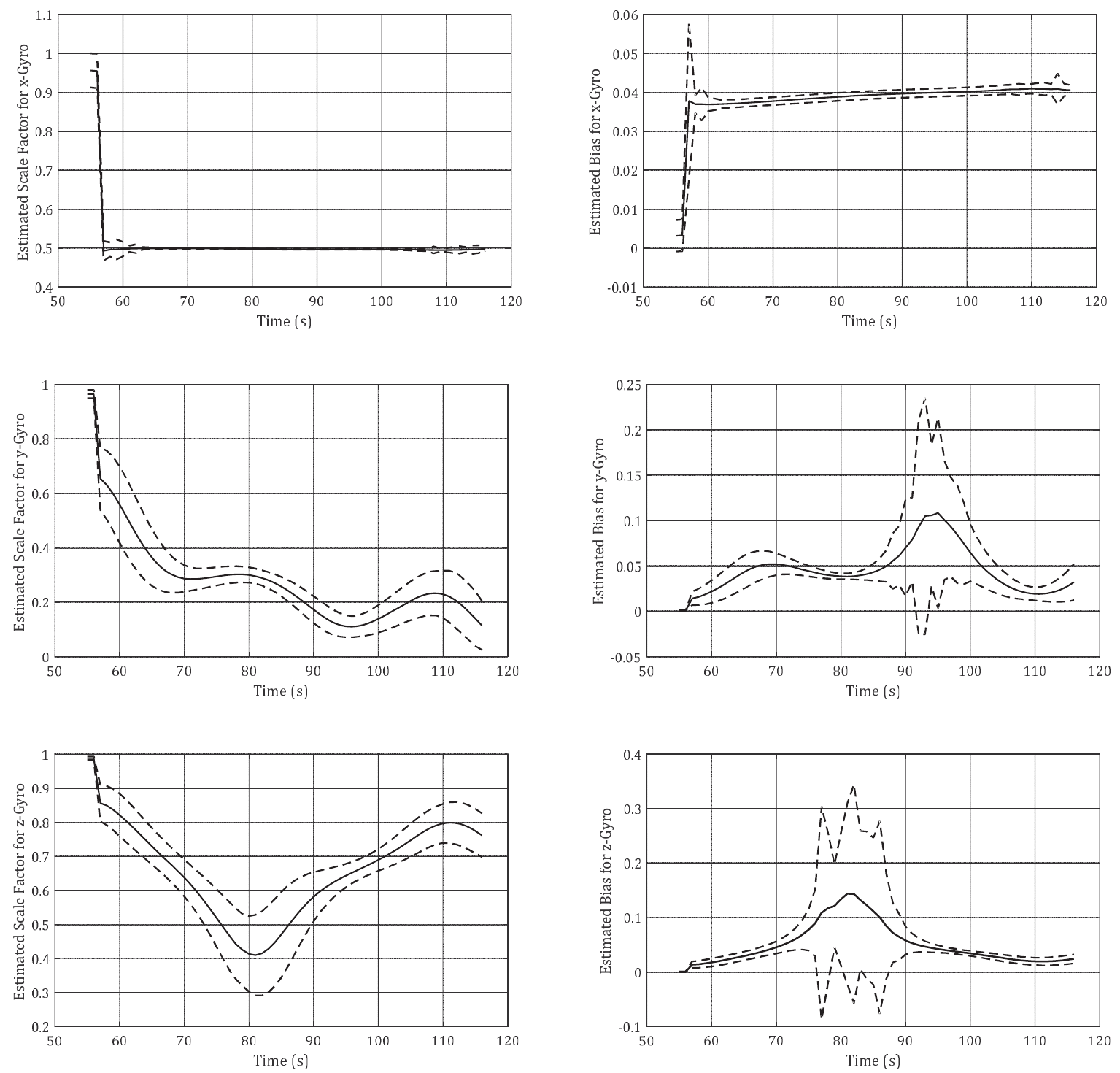}
\caption{Estimated value of six possibilities (scale factor/bias of each gyroscope) for scenario (I).}
\label{fig:7}
\end{figure}

\begin{figure}[H]
\centering\includegraphics[width=1\linewidth]{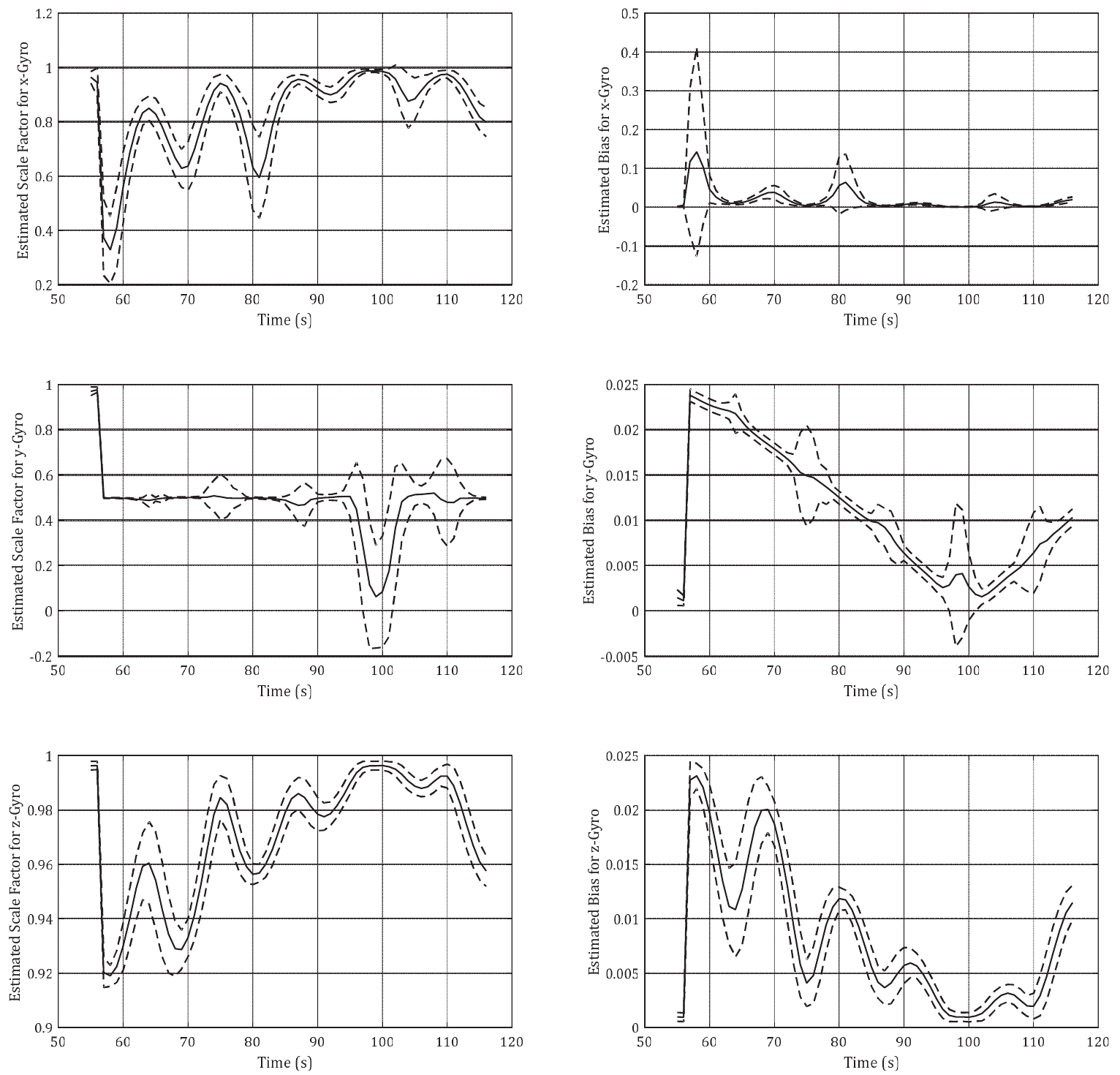}
\caption{Estimated value of six possibilities (scale factor/bias of each gyroscope) for scenario (II).}
\label{fig:8}
\end{figure}

As already stated, a recovery process is required for finding which possibility is correct. These six estimated values for each scenario have to be used in recovering gyroscope outputs and subsequently recovering the $\Psi$ function. Therefore, six recovered $\Psi$ functions exist for each scenario that are plotted in Figs. \ref{fig:9} and \ref{fig:10} for scenarios I and II, respectively.

\begin{figure}[H]
\centering\includegraphics[width=1\linewidth]{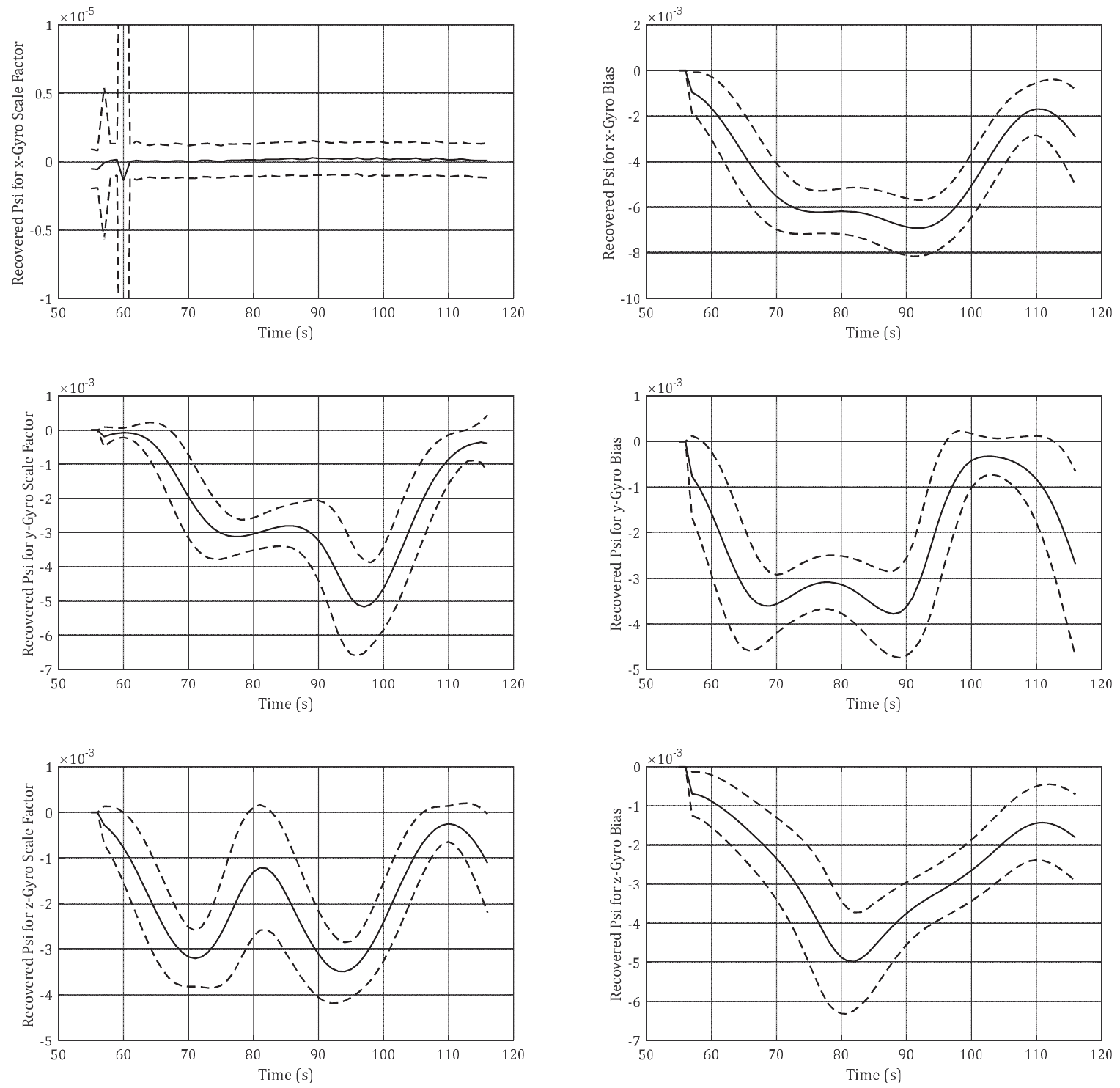}
\caption{Recovered $\Psi$ function of each possibility (scale factor bias of each gyroscope) for scenario (I).}
\label{fig:9}
\end{figure}

\begin{figure}[H]
\centering\includegraphics[width=1\linewidth]{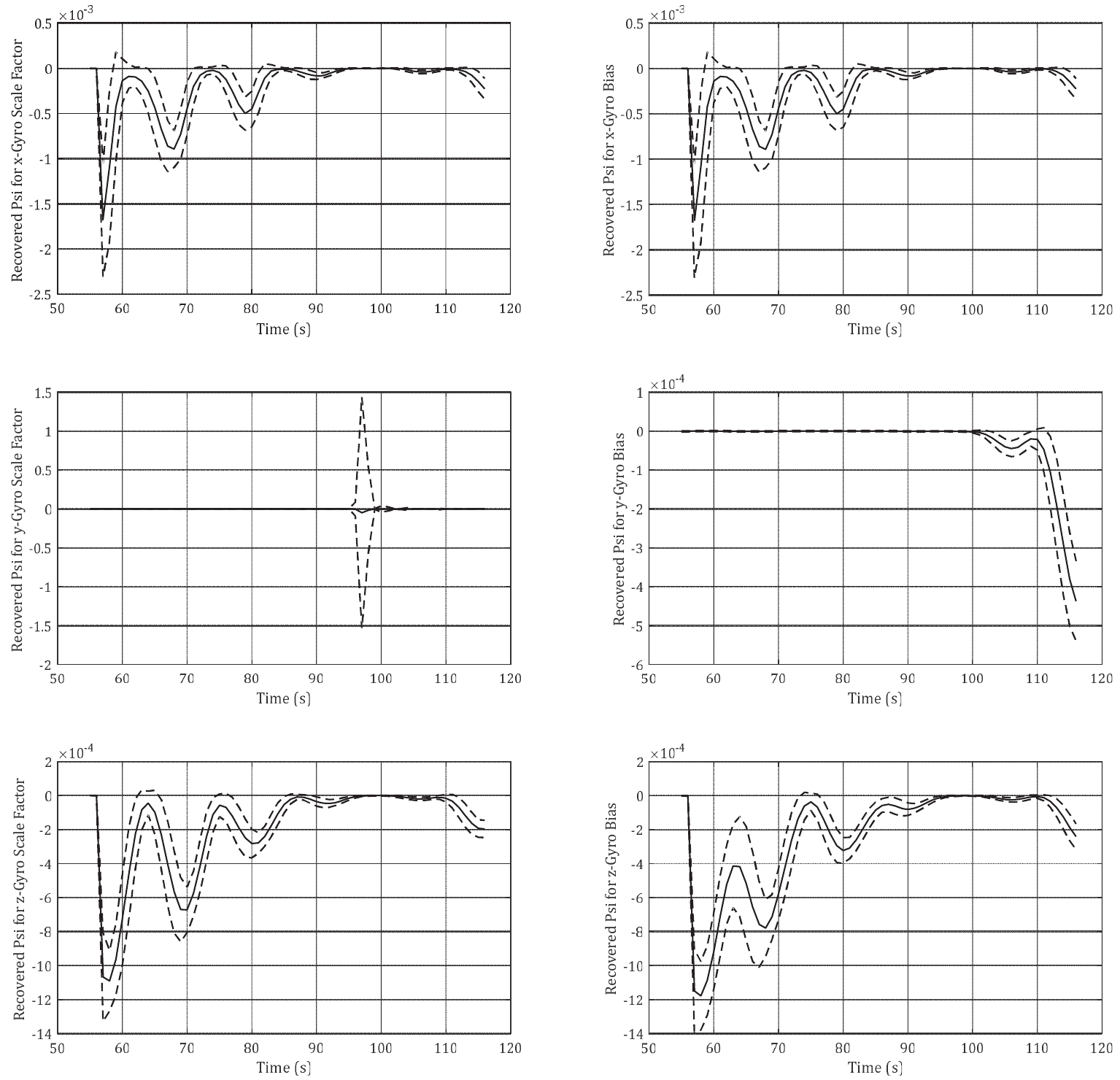}
\caption{Recovered $\Psi$ function of each possibility (scale factor bias of each gyroscope) for scenario (II).}
\label{fig:10}
\end{figure}

Fig. \ref{fig:9} can be used easily for a fault isolation process as the recovered $\Psi$ in $x$-gyroscope scale factor remains in the specified threshold and other recovered functions cross it. Fig. \ref{fig:10} shows fine recovery functions of $\Psi$ for $y$-gyroscope scale factor except of singularity points that $\omega_y$ or parameter $B$ approaches zero (around $t=98\text{ s}$). Fig. \ref{fig:11} shows $\omega_y$ (true value) and parameter $B$ (estimated value) for scenario (II) (the dashed lines are for $\pm1\sigma$ of the mean value).

\begin{figure}[H]
\centering\includegraphics[width=1\linewidth]{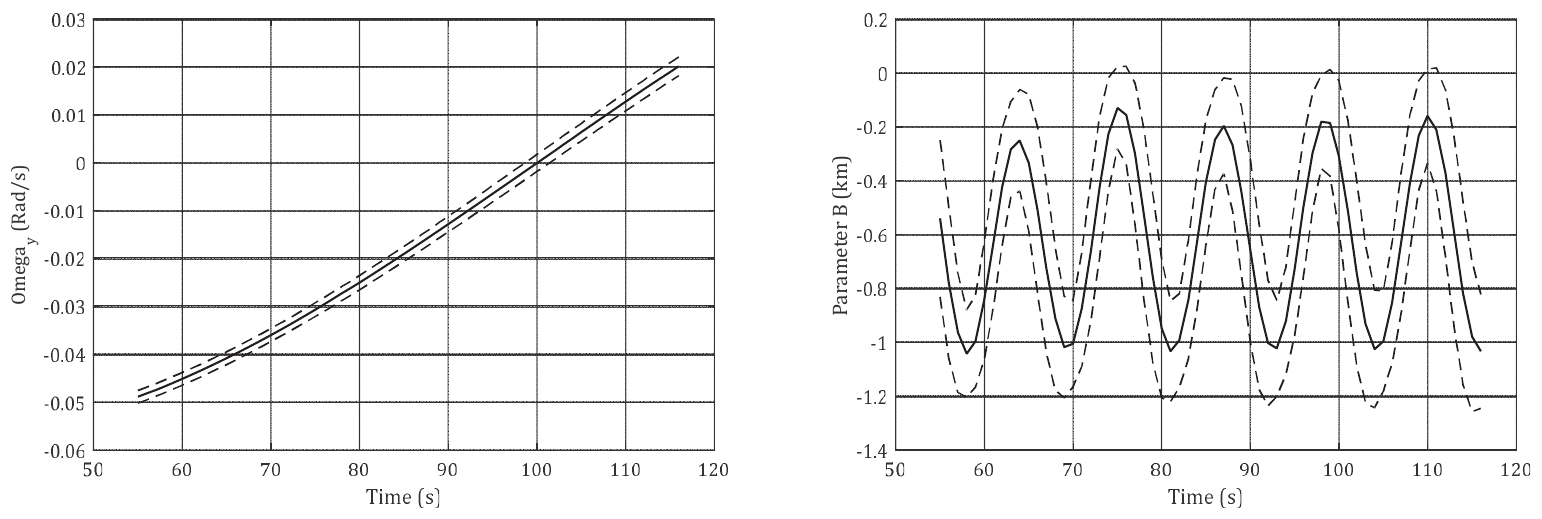}
\caption{$\omega_y$ and the parameter $B$ for scenario (II).}
\label{fig:11}
\end{figure}

It is noteworthy that the recovered $\Psi$ function in $y$-gyroscope bias of Fig. \ref{fig:11} is within the threshold for a period of 45 seconds (until $t=100\text{ s}$). Since the proposed FDI algorithm can work for time varying scale factors or biases, the estimated bias in $y$-gyroscope can be well used in a recovery process of angular velocities. This fact is already stated in Proposition \ref{prop:1} and Remark \ref{rem:1}. Fig. \ref{fig:12} shows the recovered $\omega_y$, supposing faults in scale factor (Fig. \ref{fig:12}-a) and bias (Fig. \ref{fig:12}-b). Fig. \ref{fig:13} shows the difference of two recovered values from the true values. Similar to previous graphs, the dashed line is for $\pm1\sigma$ of the mean value.

According to Fig. \ref{fig:10} it is predictable that for the possibility of $y$-gyroscope bias, estimation fails after $t\simeq 100\text{ s}$ and this is indicated in Fig. \ref{fig:13}-b.

\begin{figure}[H]
\centering\includegraphics[width=1\linewidth]{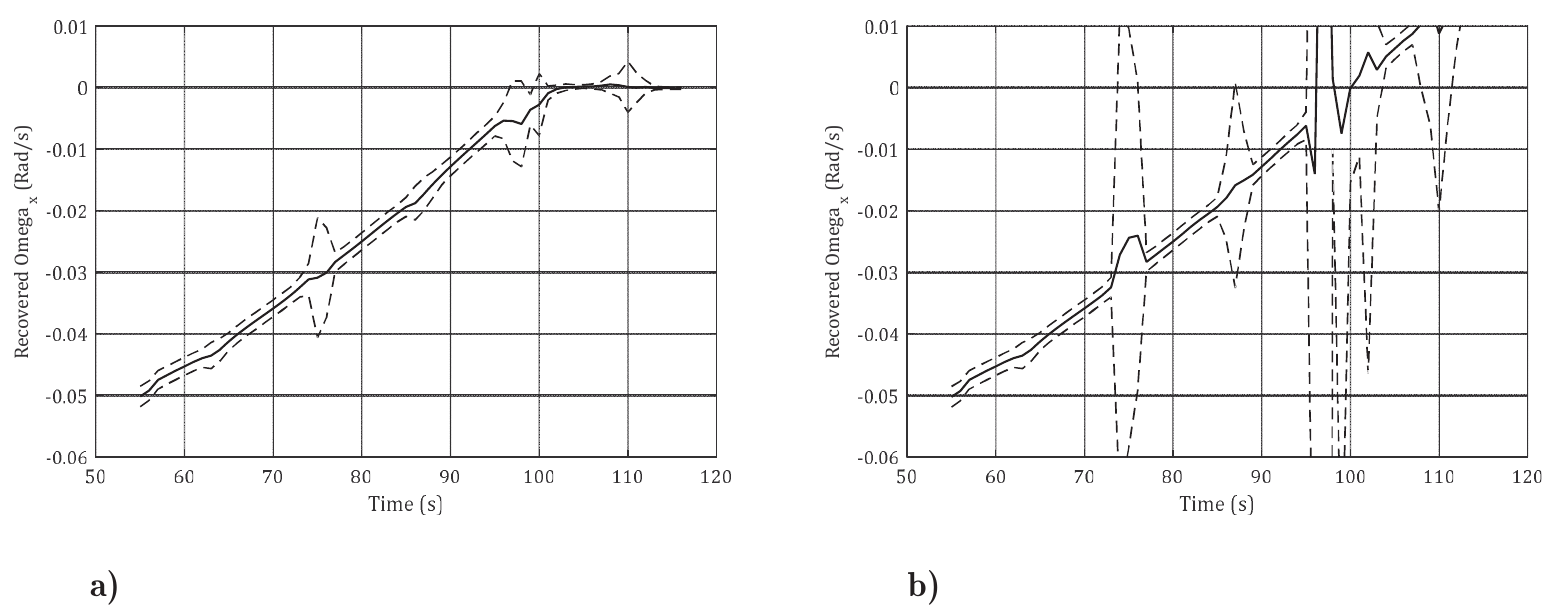}
\caption{Recovered $\omega_y$ for scenario (II) with $y$-gyroscope possibilities: a) scale factor; b) bias.}
\label{fig:12}
\end{figure}

\begin{figure}[H]
\centering\includegraphics[width=1\linewidth]{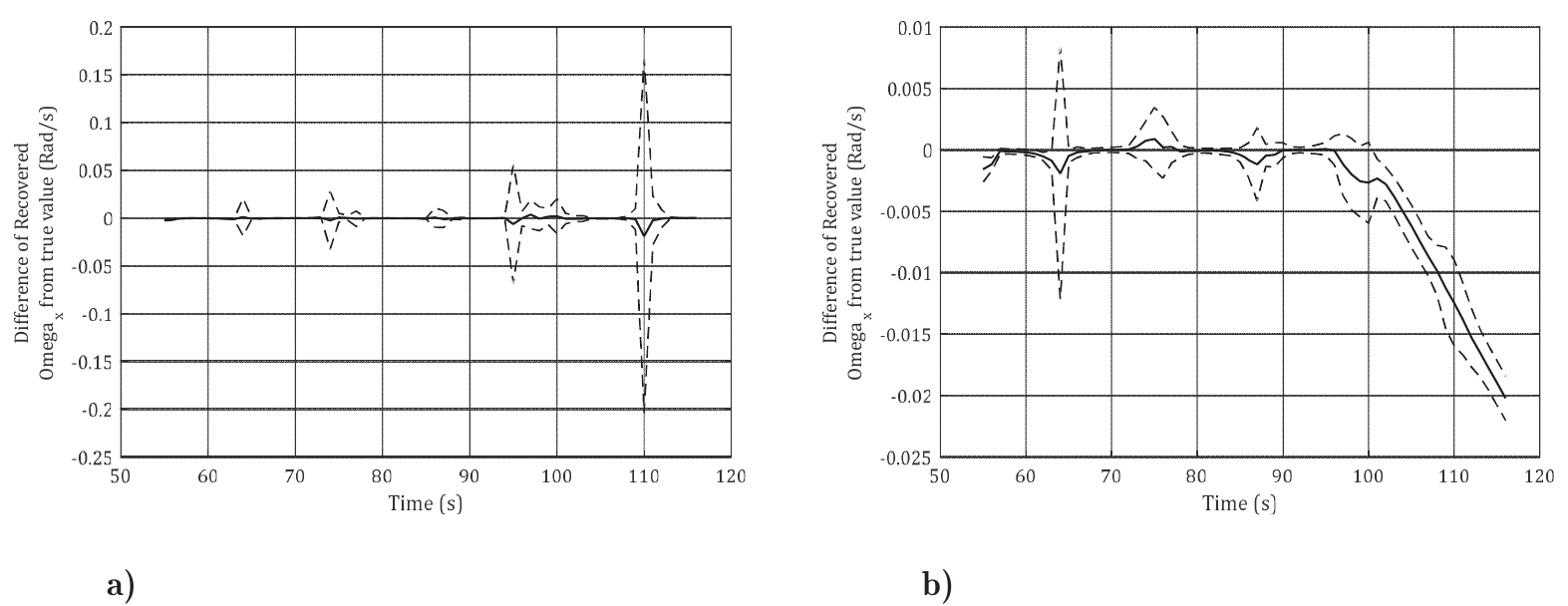}
\caption{The recovered error of $\omega_y$ for scenario (II) with $y$-gyroscope possibilities: a) scale factor; b) bias.}
\label{fig:13}
\end{figure}

Fig. \ref{fig:14} shows a comparison between different scale factors fault identification in $x$-gyroscope for scenario (I). As the scale factor approaches unity the estimation accuracy decreases. For $s_x=0$, we have the best estimation which standard deviations approaches zero at most of the times. This is because the random noise affects the scale factor identification around unity (normal condition).

\begin{figure}[H]
\centering\includegraphics[width=0.9\linewidth]{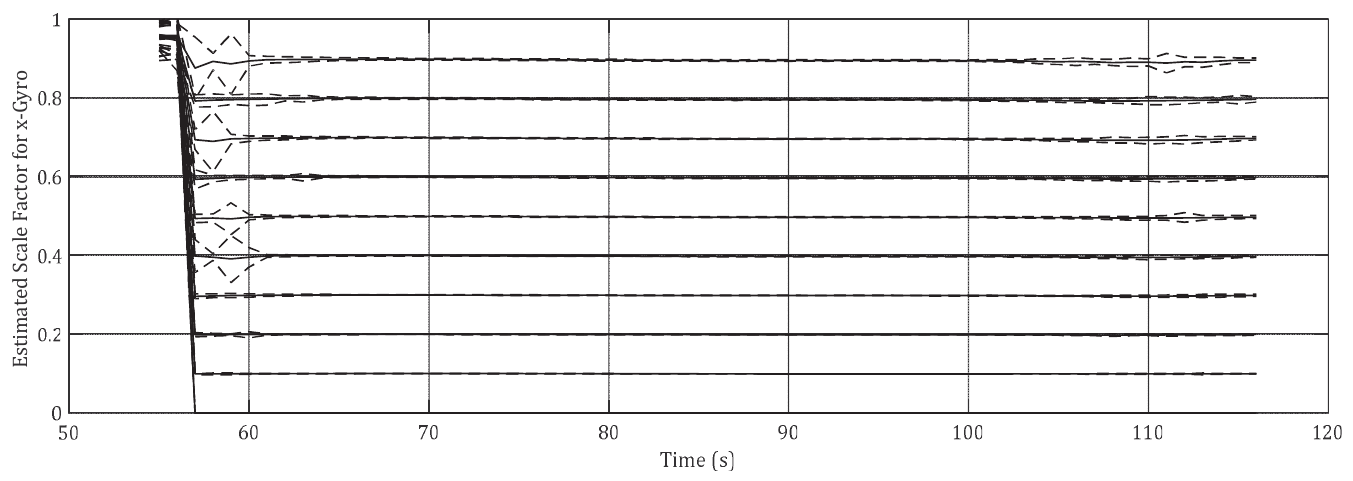}
\caption{A comparison between different scale factor estimations ($s_x=0:0.1:0.9$) for scenario (I).}
\label{fig:14}
\end{figure}

\subsection{Bias Faults}

Estimation of the bias would not fail as the angular velocities approach zero. The method is same as described for scale factor in previous section. Fig. \ref{fig:15} shows the estimated mean value and standard deviation of $x$-gyroscope biases ($b_x=0.1$, $1$, and $10\text{ deg/s}$) in logarithmic scale for simulation of scenario (II) supposing that the $x$-gyroscope bias is activated at $t=56\text{ s}$. Obviously the bias estimation performance increases as the bias value increases. This way, the bias estimation loses its superiority as the value of the bias approaches zero. Furthermore, as the parameter $A$ (for $x$-gyro estimation) approaches zero, the estimation singularity arises. Fig. \ref{fig:16} shows the time history of the parameter $A$ and $\pm1\sigma$ (its standard deviation). This parameter approaches zero for several times where three of them are catastrophic.

\begin{figure}[H]
\centering\includegraphics[width=0.9\linewidth]{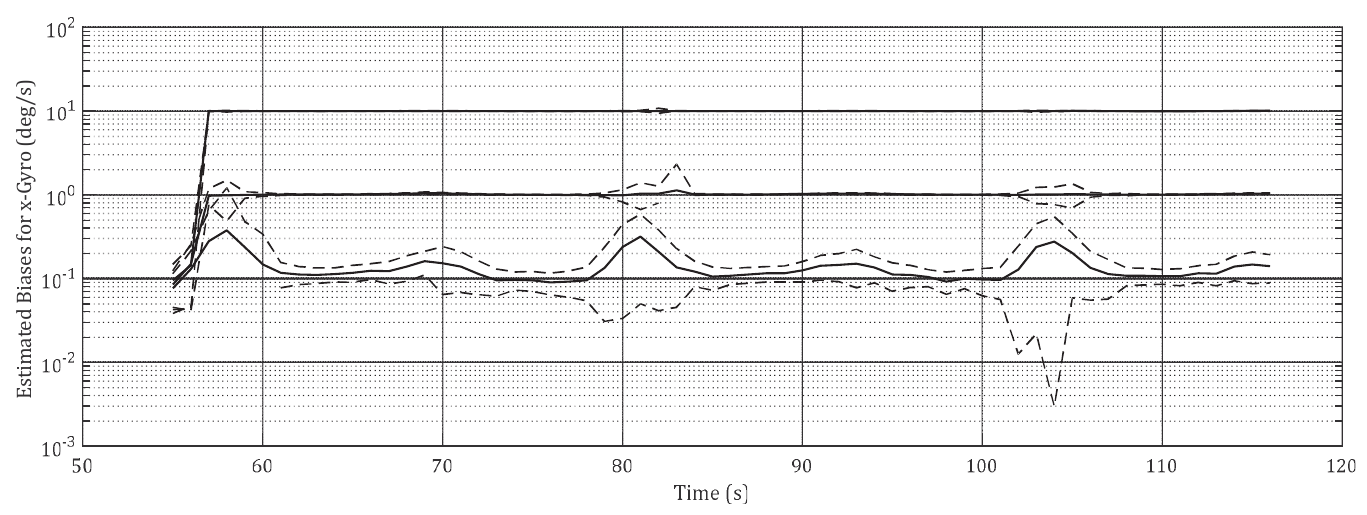}
\caption{A comparison between different values of biases ($b_x=0.1$, $1$ and $10\text{ deg/s}$) for scenario (II).}
\label{fig:15}
\end{figure}

\begin{figure}[H]
\centering\includegraphics[width=0.9\linewidth]{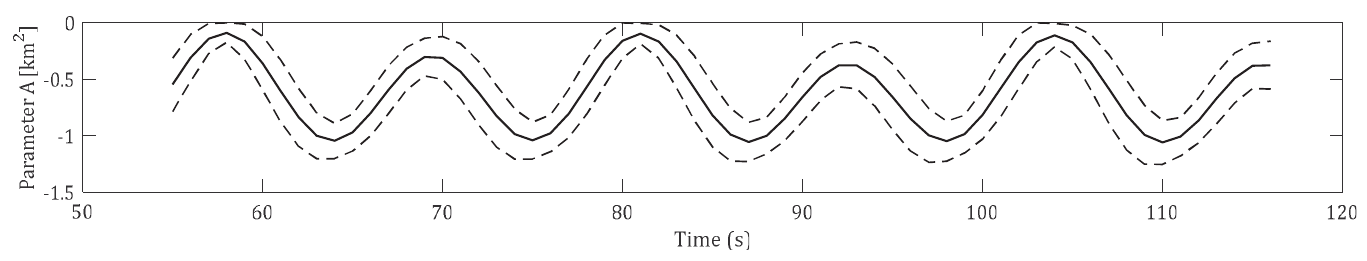}
\caption{The variation of the mean value and its deviation for the parameter $A$ for scenario (II).}
\label{fig:16}
\end{figure}

Taking different initial attitude characteristics and defining finer scenarios may prevent the singularity. However, as the results of Fig. \ref{fig:17} suggests, the recovered $\Psi$ functions even for $b_x=0.1\text{ deg/s}$ can sufficiently lead to an isolation process.

\begin{figure}[H]
\centering\includegraphics[width=1\linewidth]{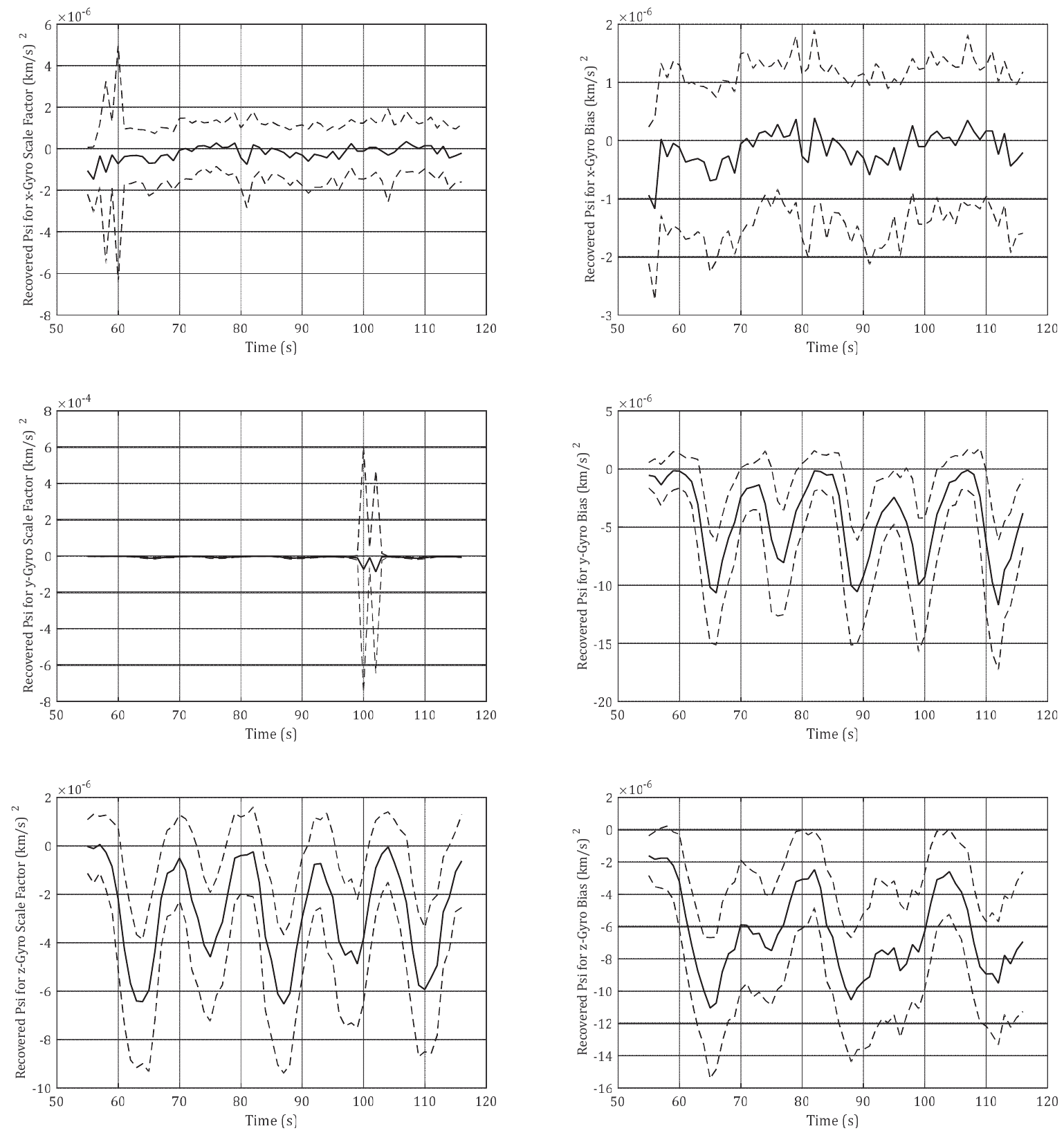}
\caption{Recovered $\Psi$ function of each possibility for scenario (II) with $b_x=0.1$.}
\label{fig:17}
\end{figure}

According to Fig. \ref{fig:17} both $x$-gyroscope scale factor and bias assumptions have recovered $\Psi$ functions laid within the threshold. According to Proposition \ref{prop:1}, both assumptions must lead to acceptable recovered values of $\omega_x$. Fig. \ref{fig:18} shows the difference of recovered and true values of $\omega_x$ in both possibilities.

\begin{figure}[H]
\centering\includegraphics[width=1\linewidth]{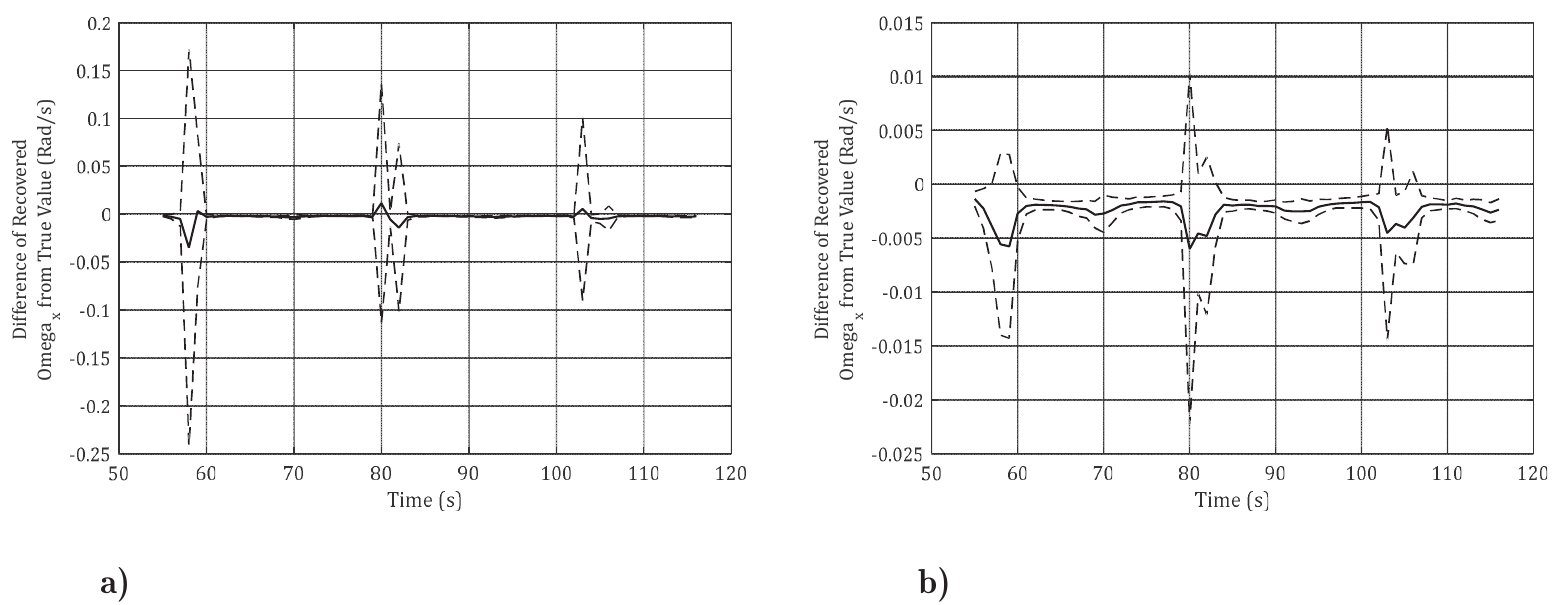}
\caption{The recovered error of $\omega_x$ for scenario (II) for $b_x=0.1$ with $x$-gyroscope possibilities: a) scale factor; b) bias.}
\label{fig:18}
\end{figure}

The most effective parameter in bias estimation is the value of parameter $A$ (for $x$-gyroscope case). As $A$ approaches zero (Fig. \ref{fig:16}) the estimation quality and subsequently the recovery performance decreases (for example at $t\simeq80\text{ s}$).

\section{Effect of Perturbations}
\label{S:7}

Perturbations can affect the satellite dynamics in an actual environment and these effects can be modeled as perturbed accelerations. Eq. \eqref{eq:2} has been derived by neglecting non-conservative perturbed forces, however, this section analyzes the effect of any neglected terms in the proposed FDI process, including conservative and non-conservative accelerations.

Variation of function $\widehat{\Psi}$ in the presence of perturbed forces can be considered equal to the variation of parameter $K$ as the other parameters are computed using relative distance measurements. $\delta K$ as a result of forcing model variation can be stated as:

\begin{equation}
\label{eq:27}
\delta K=\delta\widehat{\Psi}=\boldsymbol r_{SP}^T\delta\boldsymbol f
\end{equation}
\begin{equation}
\label{eq:28}
\delta\boldsymbol f={\boldsymbol a_p}_S-{\boldsymbol a_p}_P
\end{equation}
in which, ${\boldsymbol a_p}_S$ and ${\boldsymbol a_p}_P$ are vectors of perturbed accelerations. Returning to Eq. \eqref{eq:25} and considering $s_x<1$,
\begin{equation}
\label{eq:29}
\delta s_x=\frac{s_x(1-s_x)}{2\widetilde{\Psi}}\delta\widetilde{\Psi}
\end{equation}
Thus, the variation of $s_x$ can be stated as:
\begin{equation}
\label{eq:30}
\delta s_x=\frac{s_x(1-s_x)}{2\widetilde{\Psi}}\boldsymbol r_{SP}^T({\boldsymbol a_p}_S-{\boldsymbol a_p}_P)
\end{equation}
The maximum value for $\delta s_x$ with respect to the perturbations using Eq. \eqref{eq:16} is as follows:
\begin{equation}
\label{eq:31}
\max|\delta s_x|=\frac{1}{2r_{SP}\omega_x^2}\frac{s_x}{(1-s_x)}\max({a_p}_S+{a_p}_P)
\end{equation}
where ${a_p}_S$ and ${a_p}_P$ are the acceleration magnitudes. The most effective perturbation on the satellite around the Earth is the second zonal harmonics, known as $J_2$ perturbation.

The perturbation acceleration of primary satellite due to $J_2$ in the inertial frame of reference, using Cartesian coordinates are \citep{Curtis43}
\begin{equation}
\label{eq:32}
\begin{split}
{\boldsymbol a_p}_S=-\left[\frac{{\partial\Phi_P}_{J_2}}{{\partial r_P}_x}\boldsymbol i \quad \frac{{\partial\Phi_P}_{J_2}}{{\partial r_P}_y}\boldsymbol j \quad \frac{{\partial\Phi_P}_{J_2}}{{\partial r_P}_z}\boldsymbol k \right] \\
=-\frac{3}{2}\frac{\mu J_2R_e^2}{r_P^5}\left\{\begin{matrix}
\left[1-5\left(\frac{{r_P}_z}{r_P}\right)^2\right]{r_P}_x \\
\left[1-5\left(\frac{{r_P}_z}{r_P}\right)^2\right]{r_P}_y \\
\left[3-5\left(\frac{{r_P}_z}{r_P}\right)^2\right]{r_P}_z
\end{matrix}\right\}
\end{split}
\end{equation}
Therefore, ${a_p}_S$ can be found as:
\begin{equation}
\label{eq:33}
{a_p}_S=\frac{3}{2}\mu J_2R_e^2\frac{\sqrt{(1-\cos^2{\phi_P}_z)^2+4\cos^4{\phi_P}_z}}{r_P^6}
\end{equation}
where
$$\cos{\phi_P}_z=\frac{{r_P}_z}{r_P}$$
is the co-latitude cosine of the primary satellite position. The maximum value of ${a_p}_S$ is for $\cos^2{\phi_P}_z=1$. After repeating the same steps for the secondary satellite, the maximum value of ${a_p}_S+{a_p}_P$ is computed as:
\begin{equation}
\label{eq:34}
\max\left({a_p}_S+{a_p}_P\right)=3\mu J_2R_e^2\left[\frac{1}{(r_P^2+r_{SP}^2)^2}+\frac{1}{r_P^4}\right]
\end{equation}
With a very close approximation of $r_P^2+r_{SP}^2\simeq r_P^2$, the maximum value of $\delta s_x$ would be:
\begin{equation}
\label{eq:35}
\max|\delta s_x|=\frac{3}{r_{SP}\omega_x^2}\frac{s_x}{1-s_x}\frac{\mu J_2R_e^2}{r_P^4}
\end{equation}

From Eq. \eqref{eq:35} $\max|\delta s_x|/s_x=3\mu J_2 R_e^2/[r_{SP}r_P^4\omega_x^2(1-s_x)]$. Suppose that the desired value of $\max|\delta s_x|/s_x$ should be less than $\alpha\in(0,1)$, i.e., $\sup_{s_x}(max|\delta s_x|/s_x)=\alpha$. Let $s_x^+=1−3\mu J_2R_e^2/(\alpha r_{SP}\omega^2r_P^4)$ and $\mathbb{S}_x=\{s_x\in\mathbb{R}|0<s_x<s_x^+\}$, then it is obvious that the relative error due to $J_2$ perturbation is less than $\alpha$ if $s_x\in\mathbb{S}_x$. As an example for $\omega_x=0.055\text{ rad/s}$ (at $t\simeq60\text{ s}$ of scenario II), $s_x^+=1−1.4\times10^{-3}/\alpha$. So, for $\alpha\in(0,1.4\times10^{-3}]$, $\mathbb{S}_x=\varnothing$ and for $\alpha\in(1.4\times10^{-3},1]$, $\mathbb{S}_x\neq\varnothing$. Fig. \ref{fig:19} shows $s_x^+$ vs. $\alpha$; it shows that the relative error due to effect of $J_2$ perturbation on the scale factor is less than $\alpha=10^{-1}$ for a wide range of scale factors.

Similarly, by using Eq. \eqref{eq:26} the variation of the estimated bias is
\begin{equation}
\label{eq:36}
\delta b_x=\pm\frac{1}{2Ab_x}\delta\widetilde{\Psi}
\end{equation}
The maximum value of Eq. \eqref{eq:36} would be
\begin{equation}
\label{eq:37}
\max|\delta b_x|=\frac{1}{2b_x}\max\left({a_p}_S+{a_p}_P\right)
\end{equation}
In the same manner of scale factor analysis, by substituting Eq. \eqref{eq:34} into \eqref{eq:37} and assuming $r_P^2+r_{SP}^2\simeq r_P^2$, the following maximum value of bias estimation error is found:
\begin{equation}
\label{eq:38}
\max|\delta b_x|=\frac{3}{b_x}\frac{\mu J_2R_e^2}{r_P^4}
\end{equation}

From Eq. \eqref{eq:35} $\max|\delta b_x|/b_x=3\mu J_2R_e^2/(r_P^4b_x^2)$. Similar to the case of scale factor, suppose the desired value of $\max|\delta b_x|/b_x$ should be less than $\beta\in(0,1)$, i.e., $\sup_{b_x}(\max|\delta b_x|/b_x)=\beta$. Let $b_x^+=\sqrt{3\mu J_2R_e^2/(\beta r_P^4)}$ and $\mathbb{B}_x=\{b_x\in\mathbb{R}|b_x^+<b_x\}$, then it is obvious that the relative error due to $J_2$ perturbation is less than $\beta$ if $b_x\in\mathbb{B}_x$. As an example, at $t\simeq60\text{ s}$ of scenario II, $b_x^+=\sqrt{4.15\times10^{-6}/\beta}$. So, for $\beta\in(0,1]$, $\mathbb{B}_x\neq\varnothing$. Fig. \ref{fig:19} shows $b_x^+$ versus $\beta$; it shows that the relative error due to effect of $J_2$ perturbation on the bias is more than $\beta=10^{-1}$ for a small range of biases, namely less than $b_x\simeq0.006$.

\begin{figure}[H]
\centering\includegraphics[width=1\linewidth]{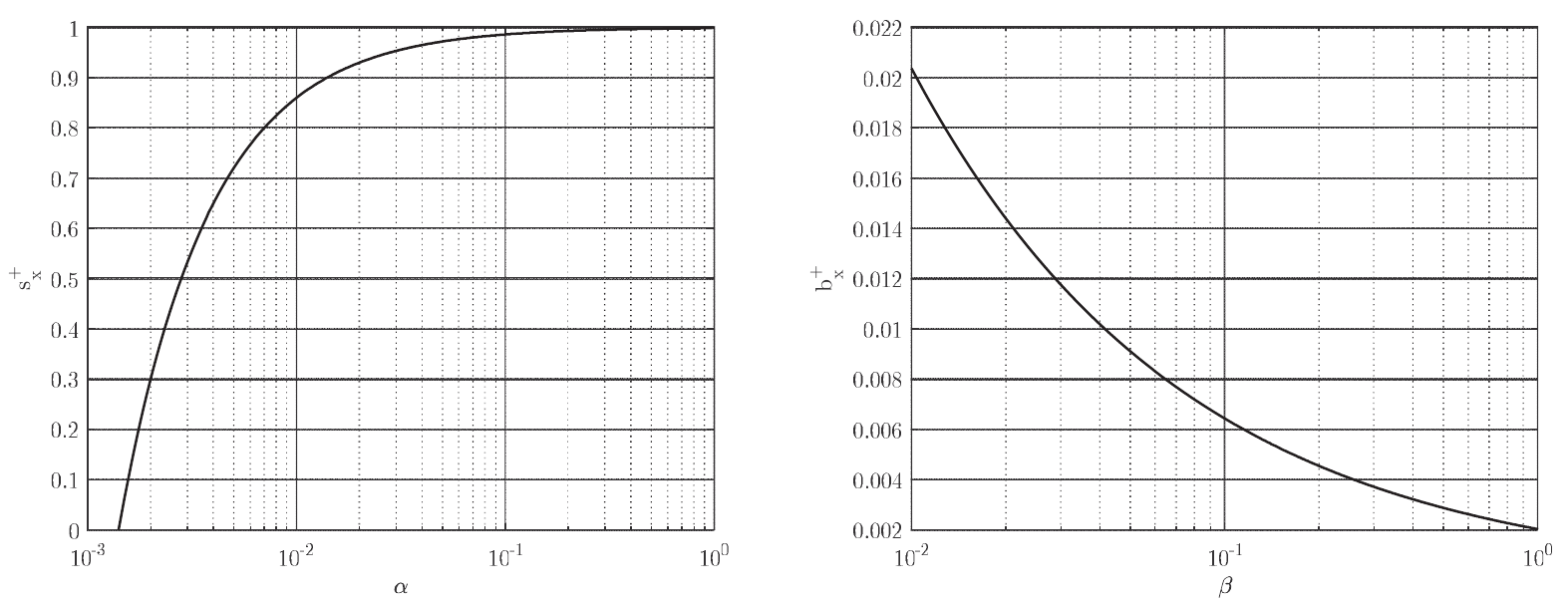}
\caption{Effect of $J_2$ perturbation on the maximum relative error of scale factor (left), and bias (right).}
\label{fig:19}
\end{figure}

Since drag is another affecting perturbation for a low Earth orbiting satellite, a model of ${a_d}_i=q_iS_i{C_d}_i/m_i$ is considered for the $i$th satellite ($i\triangleq P$ or $S$ for primary and secondary satellites), where $q_i$ is the dynamic pressure, $m_i$ is the mass, $S_i$ is the cross-section area, and ${C_d}_i$ is the drag coefficient of the corresponding satellite. Substituting the drag perturbation acceleration in Eqs. \eqref{eq:30} and \eqref{eq:36}, considering two satellites in circular orbits with equal masses $m$, drag coefficients $C_d$, and areas $S$, the maximum error in estimation of scale factor and bias are:
\begin{equation}
\label{eq:39}
\begin{split}
\max|\delta s_x|=\frac{\mu}{4r_{SP}\omega_x^2}\frac{s_x}{1-s_x}\frac{SC_d\rho_0}{m}\exp\left(\frac{h_0+R_e}{H}\right)\times \\
\left|\exp\left(-\frac{r_P+r_{SP}}{H}\right)\frac{1}{r_P+r_{SP}}-\exp\left(-\frac{r_p}{H}\right)\frac{1}{r_P}\right|
\end{split}
\end{equation}

From these equations, it can be seen that for the simulated system in this paper with $C_d=2.5$, $m=4.1\text{ kg}$, $S=1\text{ m}^2$, and relative distances less than $50\text{ km}$, the drag effect is very small (almost $\max|\delta s_x|/s_x=1.1631\times10^{-10}/(1-s_x)$ and $\max|\delta b_x|/b_x=1.7592\times10^{-11}/b_x^2$) in comparison to the two body effect. Thus, it can be concluded that for FF satellites with short relative distances the effect of other terms can be simply disregarded from analysis for the range of scale factors and biases above selected threshold.

\begin{remark}
As it can be concluded from Eqs. \eqref{eq:31} and \eqref{eq:37}, the effect of perturbations tend to infinity if scale factor approaches unity and/or bias approaches zero. So, it might be supposed that the proposed method fails for slight faults due to effect of perturbations. However, an appropriate selection of threshold value can prevent the method from failure. The thresholds are defined on $\widetilde{\Psi}$, while $\delta\widetilde{\Psi}$ is independent of scale factors and biases. Hence, a proper threshold can be designed in order to detect and isolate those faults that are in the sets of $\mathbb{S}_x$ and $\mathbb{B}_x$ defined above. Therefore, even in presence of perturbations, the proposed fault detection and isolation technique is applicable and the order of the accuracy is adjustable through the value of threshold.
\end{remark}

\section{Conclusions}
\label{S:8}

This paper has proposed and analyzed a relative dynamic fault detection and isolation approach for a pair of Earth-orbiting satellites in a two body gravitational model. The algorithm makes use of the relative position vector in the body coordinates of primary satellite and obtains the relative velocity and acceleration vectors by a numerical differentiation method. These data including the primary satellite position, constructs a scalar function of primary satellite gyroscopes outputs. Deviation of this function from the threshold is used as a measure of gyroscope fault and its behavior after fault detection is utilized for fault isolation purposes. The algorithm does not need any knowledge of secondary satellite orbit or attitude. Moreover, because the satellites in formation already have the relative distance measurement sensors, no additional sensor is required for the proposed algorithm.

The algorithm performance strongly depends on the relative sensor noises and the numerical differentiation method. The isolation accuracy decreases in special conditions that the secondary satellite nearly lays in the same direction of faulty gyroscope. Accuracy also decreases for scale factor estimation as the angular velocity approaches zero in the corresponding direction. The effect of perturbations depends on the order of faults. In a conventional condition the maximum relative error can be in the order of $10^{-3}$ for scale factor and bias estimations.

\bibliography{mybibfile}

\end{document}